\newtheorem{theorem}{Theorem}[section]
\newtheorem{lemma}[theorem]{Lemma}
\newtheorem{prop}[theorem]{Proposition}
\newtheorem{cor}[theorem]{Corollary}
\newtheorem{definition}[theorem]{Definition}
\theoremstyle{remark}
\newcommand{\rr}{{\mathbb R}}
\newcommand{\N}{{\mathbb N}}
\newcommand{\dF}{\mathbb{F}}
\newcommand{\dE}{\mathbb{E}}
\newcommand{\wt}{\widetilde}
\date{\today}
\begin{document}

\title[Regularization of differential equations by two fractional noises]{Regularization of differential equations by two fractional noises} 

\author{David Nualart}\thanks{David Nualart was supported by the NSF grant  DMS 1811181}
\address{David Nualart, Department of Mathematics, University of Kansas, 405 Snow Hall, Lawrence, Kansas, 66045, USA}
\email{nualart\@@{}ku.edu}

\author{Ercan S\"onmez}
\address{Ercan S\"onmez, Department of Statistics, University of Klagenfurt, Universit\"atsstra{\ss}e 65--67, 9020 Klagenfurt, Austria}
\email{ercan.soenmez\@@{}aau.at}

\begin{abstract}
In this paper we show the existence and uniqueness of a solution for a stochastic differential equation driven by an additive noise which is the sum of two fractional Brownian motions with different Hurst parameters. The proofs are based on the techniques of fractional calculus and Girsanov theorem. In particular, we show that the regularization effect of the fractional Brownian motion with the smaller Hurst index
dominates.
\end{abstract}

\keywords{Fractional Brownian motion, Girsanov theorem, fractional calculus.}
\subjclass[2010]{Primary 60H10; Secondary 65C30}
\maketitle

\allowdisplaybreaks

\section{Introduction}

The aim of this paper is to show the existence and uniqueness of a strong solution for the stochastic differential equation
\[
X_t= x_0 + \int_0^t b(s,X_s) ds + B^{H_1}_t  +B^{H_2}_t , \quad t\in [0,T],
\]
where $x_0\in \mathbb{R}$, $B^{H_1}$ and $B^{H_2}$ are two fractional Brownian motions with  different Hurst parameters $H_1 \in (0,1)$ and $ H_2\in(0,1) $, respectively.
Our main assumption is that both fractional Brownian motions can be expressed as Volterra processes with respect to the same standard Brownian motion $W$, that is, 
\begin{align*}
B_t^{H_1}  = \int_0^t K_{H_1}(t,s) dW_s \quad \text{ and } \quad
B_t^{H_2}  = \int_0^t K_{H_2}(t,s) dW_s
\end{align*}
for suitable square integrable kernels $K_{H_i}(t,s) $, $i=1,2$.   We will impose  weak regularity assumptions on the drift $b(t,x)$. More precisely,
if one of the Hurst parameters is in $(0, \frac 12]$, then it is enough to assume that $b(t,x)$ is  Borel measurable and it has linear growth in $x$, whereas if both Hurst parameters are strictly larger than $\frac 12$, then  we will assume that $b$ is  $\beta$-H\"older continuous in   the time variable and $\gamma$-H\"older continuous in the space variable, with $\beta > 1- \frac{1}{2\min(H_1,H_2)}$ and $\gamma > \min(H_1,H_2) -\frac 12$ (see conditions (A1) and (A2) below).

Originally motivated by applications in finance there has been increasing interest in the mixed fractional Brownian motion, which is the sum of a standard Brownian motion and a fractional Brownian motion. Such a process has first been studied in \cite{cheridito}. Classical results for stochastic differential equations with respect to mixed fractional Brownian motions can be found in \cite{gu, KU2, MP, Mish, Mish2} and results for such stochastic differential equations with irregular drift coefficient in \cite{s}. The noise involved in the stochastic differential equations considered in this paper can be seen as a generalization of a process which has quite recently been studied in \cite{dufit} and was also discussed in \cite{Mishbook}. In this setting we allow for general Hurst indexes and don't restrict to, but include the case in which one of the noises is a standard Brownian motion. Such processes have also been investigated in \cite{miao, el, thale, zahle, zili} in a different framework. Moreover, unlike in the just mentioned references in the present paper the two fractional Brownian motions are not independent.

Since the pioneering works by  Zvonkin \cite{z} and Veretennikov \cite{v}, there has been  a lot of interest in the problem of regularization
of stochastic differential equations by noise.  In the case of the fractional  Brownian motion, the regularization effect  was first established by Nualart and Ouknine in \cite{NO}.

The proof of our results follow the methodology introduced by Nualart and Ouknine in \cite{NO} for the case of an additive fractional Brownian motion. The main ingredient of the proof is the Girsanov theorem for the fractional Brownian motion, established by Decreusefond and \"Ust\"unel in \cite{DU}. A key challenge in this paper is to extend and apply the Girsanov theorem for two noises given by the sum of two (dependent) fractional Brownian motions by using profound techniques of fractional operator theory.

\section{Preliminaries}
\subsection{Fractional calculus}
In this subsection we review  some basic definition and properties of the fractional operators. For more information on this topic we refer to 
\cite{Sam}.
 
 If $f \colon [a,b] \rightarrow \rr$ is an integrable function and $\alpha \in (0, \infty)$, the left
fractional Riemann-Liouville integral of $f$ of order $\alpha $ on $%
(a,b)$ is given at almost all $x$ by
\[
I_{a+}^{\alpha }f(x)=\frac{1}{\Gamma (\alpha )}\int_{a}^{x}(x-y)^{\alpha
-1}f(y)dy,
\]
where $\Gamma $ denotes the Euler function.
 We denote by $I_{a+}^{\alpha }(L^{p})$ the image
of $L^{p}([a,b])$ by the operator $I_{a+}^{\alpha }$ . If $f\in
I_{a+}^{\alpha }(L^{p}),$ the function $\phi $ such that $%
f=I_{a+}^{\alpha }\phi $ is unique in $L^{p}$ and it agrees with the
left-sided \ Riemann-Liouville derivative of $f$ of order $%
\alpha $ defined by
\[
D_{a+}^{\alpha }f(x)=\frac{1}{\Gamma (1-\alpha )}\frac{d}{dx}\int_{a}^{x}%
\frac{f(y)}{(x-y)^{\alpha }}dy.
\]
The derivative of $f$ has the following Weil representation:
\begin{equation} 
D_{a+}^{\alpha }f(x)=\frac{1}{\Gamma (1-\alpha )}\left( \frac{f(x)}{%
(x-a)^{\alpha }}+\alpha \int_{a}^{x}\frac{f(x)-f(y)}{(x-y)^{\alpha +1}}%
dy\right) \mathbf{1}_{(a,b)}(x),  \label{we}
\end{equation}
where the convergence of the integrals at the singularity $x=y$ holds in $%
L^{p}$-sense.

When $\alpha p>1$ any function in $I_{a+}^{\alpha }(L^{p})$ is $\left(
\alpha -\frac{1}{p}\right) $-H\"{o}lder continuous. On the other hand, any
H\"{o}lder continuous function of order $\beta >\alpha $ has fractional
derivative of order $\alpha $. That is, $C^{\beta }([a,b])\subset
I_{a+}^{\alpha }(L^{p})$ for all $p>1$.

We have the first composition formula
\[
I_{a^{+}}^{\alpha }(I_{a^{+}}^{\beta }f)=I_{a^{+}}^{\alpha +\beta }f.
\]
By definition,  for $f\in I_{a+}^{\alpha }(L^{p}),$%
\[
I_{a+}^{\alpha }(D_{a+}^{\alpha }f)=f
\]
and for a general $f\in L^{1}([a,b])$ we have
\[
D_{a+}^{\alpha }(I_{a+}^{\alpha }f)=f.
\]
If $f\in I_{a+}^{\alpha +\beta }$ $(L^{1})$, $\alpha \geq 0,$ $\beta \geq
0,$ $\alpha +\beta \leq 1$ we have the second composition formula
\[
D_{a+}^{\alpha }(D_{a+}^{\beta }f)=D_{a+}^{\alpha +\beta }f.
\]

\subsection{Fractional Brownian motion}

The  fractional Brownian motion
with Hurst parameter $0<H<1$, denoted by
 $B^{H}=(B_{t}^{H})_{t\in [0,T]}$  is a zero-mean  Gaussian process with covariance function
 \[
 R_H(t,s):= t^{2H} + s^{2H} - |t-s|^{2H}.
 \]
 We assume that $B^H$ is defined 
 on a probability space $(\Omega ,%
\mathbb{F},P)$.  

Consider the isomorphism
\[
 K_H \colon L^2 ([0,T]) \to I_{0+}^{H+\frac{1}{2}}  ( L^2 ([0,T]) 
 \]
given by 
\begin{align*}
(K_H h) (s) & = I_{0+}^{2H} s^{\frac{1}{2} - H} I_{0+}^{\frac{1}{2} - H} s^{H-\frac{1}{2} } h, \quad H \leq \frac12, \\
(K_H h) (s) & = I_{0+}^{1} s^{H-\frac{1}{2} } I_{0+}^{H-\frac{1}{2} } s^{\frac{1}{2} -H} h, \quad H > \frac12.
\end{align*}
for all $h \in L^2 ([0,T])$ and $s \in [0,T]$.

Its inverse operator $K_H^{-1}$ satisfies for $s \in [0,T]$ that
\begin{align} \label{pG}
\begin{split}
K_H^{-1} h &= s^{H-\frac{1}{2}} I_{0+}^{\frac{1}{2} - H} s^{\frac{1}{2} - H} h',\quad H \leq \frac12, \\
K_H^{-1} h &= s^{H-\frac{1}{2}} D_{0+}^{H-\frac{1}{2}} s^{\frac{1}{2} - H} h', \quad H > \frac12.
\end{split}
\end{align}
where $h \in I_{0+}^{H+\frac{1}{2}}  ( L^2 ([0,T]) )$ is absolutely continuous with respect to the Lebesgue measure.

Then, the covariance kernel $R_{H}(t,s)$ can be written as
\[
R_{H}(t,s)=\int_{0}^{t\wedge s}K_{H}(t,r)K_{H}(s,r)dr,
\]
where $K_{H}$ is a square integrable kernel given by $ K_H(t,s) := (K^*_H \mathbf{1}_{[0,t]}) (s)$
and $K^*_H$ denotes the adjoint operator of $K_H$.
  Moreover, there exists a standard Brownian motion $W=(W_t)_{t \in [0, T]}$ such that
\begin{equation} \label{ecu1}
B_t^H = \int_0^t K_H(t,s) dW_s ,\quad t \in [0,T].
\end{equation}

\subsection{Girsanov theorems}

 Girsanov theorem will play a fundamental role in the proof of our results. First, we will state the classical Girsanov theorem for the Brownian motion.

 Let $(\dF_{t} )_{t\in [0,T]}$   be a
right-continuous increasing family of $\sigma $-fields on $(\Omega ,\mathbb{F},P)$ such that $\dF_{0}$ contains the sets of probability zero. 
 We recall that  an $\dF$-Brownian motion $W=(W_t)_{t\in [0,T]}$ is a Brownian motion,  such that $W_t$ is $\dF_t$-measurable for each $t\in [0,T]$, implying that $W$ is $\dF$-adapted,  and $W_t-W_s$ is independent of $\dF_s$ for all $0\le s\le t \le T$.
 
 We say that a fractional Brownian motion $B^H$ adapted to the filtration $ \dF$ is an $ \dF$-fractional Brownian motion, if the the process $W$ given by  \eqref{ecu1} is an $\dF$-Brownian motion.

\begin{theorem} 
  Let $(\psi_t)_{t \in [0, T]}$ be an  $\dF$-adapted stochastic process such that
$\mathbb{E} \left[ \int_0^t \psi_s^2 ds \right] < \infty$
for every $t\in [0,T]$. If
\[
L_T:= \exp \left( \int_0^T\psi_s dW_s - \frac12 \int_0^T \psi_s^2 ds\right)
 \]
 satisfies $\dE[L_T]=1$, 
then the process $(\wt{W}_t)_{t \in [0, T]}$ with
\[
 \wt{W}_t = W_t - \int_0^t \psi_s ds,\quad  t\in (0,T],
 \]
is an $\dF$-Brownian motion with respect to the probability measure $Q$ given by
$\frac{dQ}{dP} = L_T$.
 \end{theorem}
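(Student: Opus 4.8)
The plan is to deduce the result from L\'evy's characterization of Brownian motion, after setting up the exponential density process and verifying that it is a true martingale. First I would consider the process $L_t = \exp\left( \int_0^t \psi_s dW_s - \frac12 \int_0^t \psi_s^2 ds \right)$ for $t \in [0,T]$. Since $\psi$ is $\dF$-adapted with $\dE\left[ \int_0^t \psi_s^2 ds \right] < \infty$, the stochastic integral $M_t := \int_0^t \psi_s dW_s$ is a well-defined continuous square-integrable $(P,\dF)$-martingale, and It\^o's formula applied to $\exp\left( M_t - \frac12 \langle M \rangle_t \right)$ yields $dL_t = L_t \psi_t dW_t$ with $L_0 = 1$. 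Thus $L$ is a nonnegative continuous local $P$-martingale, hence a supermartingale; the hypothesis $\dE[L_T] = 1 = \dE[L_0]$ then forces $L$ to be a genuine $(P,\dF)$-martingale on $[0,T]$, so that $Q$ defined by $\frac{dQ}{dP} = L_T$ is a probability measure equivalent to $P$, with conditional density process $\dE[L_T \mid \dF_t] = L_t$.

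Next I would reduce the claim to two facts about $\wt{W}_t = W_t - \int_0^t \psi_s ds$ under $Q$: that it is a continuous $(Q,\dF)$-local martingale, and that its quadratic variation is $\langle \wt{W} \rangle_t = t$. For the first, I would invoke the standard Bayes-type criterion that an $\dF$-adapted continuous process $N$ is a $Q$-local martingale if and only if $NL$ is a $P$-local martingale. Applying It\^o's product rule to $\wt{W}_t L_t$, and using $d\wt{W}_t = dW_t - \psi_t dt$, $dL_t = L_t \psi_t dW_t$, and $d\langle \wt{W}, L \rangle_t = L_t \psi_t dt$ (the finite-variation drift does not contribute to the covariation), the $dt$ terms cancel and one is left with $d(\wt{W}_t L_t) = (1 + \wt{W}_t \psi_t) L_t dW_t$, which is a $P$-local martingale. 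Hence $\wt{W}$ is a $(Q,\dF)$-local martingale. For the second, the quadratic variation is unaffected by subtracting the finite-variation term $\int_0^t \psi_s ds$ and is invariant under the equivalent change of measure, so $\langle \wt{W} \rangle_t = \langle W \rangle_t = t$.

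Finally, since $\wt{W}$ is a continuous $(Q,\dF)$-local martingale with $\wt{W}_0 = 0$ and $\langle \wt{W} \rangle_t = t$, L\'evy's characterization theorem yields that $\wt{W}$ is a standard $\dF$-Brownian motion under $Q$; its adaptedness and the independence of its increments from $\dF_s$ are part of this conclusion. The main obstacle I anticipate is the rigorous justification that $L$ is a true $P$-martingale rather than merely a local martingale, which is precisely where the assumption $\dE[L_T] = 1$ enters by upgrading a nonnegative local martingale started at $1$ to a martingale, together with the careful bookkeeping between local and true martingales in the product-rule computation and the measure change. These are the only genuinely delicate points, the remainder being routine stochastic calculus.
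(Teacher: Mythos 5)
Your proposal is a correct and complete rendition of the standard proof of the classical Girsanov theorem: the exponential local martingale $L$, upgraded to a true martingale by the hypothesis $\dE[L_T]=1$, the Bayes criterion reducing the $Q$-local-martingale property of $\wt{W}$ to the $P$-local-martingale property of $\wt{W}L$ via the product rule, and L\'evy's characterization. The paper states this result as the classical Girsanov theorem and offers no proof of its own, so there is nothing to compare against; your argument is exactly the textbook route one would cite, and the one delicate point you flag (true versus local martingale for $L$) is indeed where the hypothesis $\dE[L_T]=1$ is consumed.
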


As a consequence (see  \cite[Theorem 4.9]{DU}), one can show the following version of Girsanov theorem for the fractional Brownian motion. 

\begin{theorem} \label{thm1}
Let $u=(u_t)_{t \in [0, T]}$ be an $\dF$-adapted stochastic process with integrable paths and suppose that  $\int_0^\bullet u_s ds \in  I_{0+}^{H+\frac{1}{2}} ( L^2 ([0,T]) )$ almost surely. Set
\[
\psi_t=K_H^{-1} \left(\int_0^\bullet u_s ds\right)(t)
\]
and consider the shifted process
\begin{equation*}
\wt{B}_t^H = B_t^H - \int_0^t u_s ds, \quad t \in [0,T].
\end{equation*}
If
\[
L_T:= \exp \left( \int_0^T\psi_s dW_s - \frac12 \int_0^T \psi_s^2 ds\right)
 \]
 satisfies $\dE[L_T]=1$, 
then the shifted process $\wt{B}^H=(\wt{B}^H_t)_{t \in [0, T]}$ is an $\dF$-fractional Brownian motion with Hurst parameter $H$ under the probability measure $Q$ defined by $\frac{dQ}{dP}= L_T$.
\end{theorem}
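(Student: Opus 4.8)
The plan is to reduce the statement to the classical Girsanov theorem for the underlying Brownian motion $W$ by transferring the fractional drift $\int_0^\bullet u_s\,ds$ through the kernel $K_H$. The role of the hypothesis $\int_0^\bullet u_s\,ds\in I_{0+}^{H+\frac12}(L^2([0,T]))$ is precisely to guarantee that $\psi=K_H^{-1}(\int_0^\bullet u_s\,ds)$ is a well-defined element of $L^2([0,T])$ almost surely, so that the exponential $L_T$ and the shifted Brownian motion below make sense.

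First I would record the key deterministic identity. Since $K_H\colon L^2([0,T])\to I_{0+}^{H+\frac12}(L^2([0,T]))$ is the integral operator associated with the kernel $K_H(t,s)$, namely $(K_Hh)(t)=\int_0^t K_H(t,s)h(s)\,ds$, the definition $\psi=K_H^{-1}(\int_0^\bullet u_s\,ds)$ is equivalent to $K_H\psi=\int_0^\bullet u_s\,ds$, that is,
\[
\int_0^t K_H(t,s)\,\psi_s\,ds=\int_0^t u_s\,ds,\qquad t\in[0,T].
\]
Establishing this identity carefully---matching the abstract operator $K_H$ defined through the fractional integrals in the two regimes $H\le\frac12$ and $H>\frac12$ with the integral kernel $K_H(t,s)=(K_H^*\mathbf 1_{[0,t]})(s)$ appearing in the representation \eqref{ecu1}---is the technical heart of the argument; it is where the composition formulas and the mapping properties of $I_{0+}^{\alpha}$ and $D_{0+}^{\alpha}$ recalled above enter.

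Next I would apply the classical Girsanov theorem stated above to the $\dF$-adapted process $\psi$. Under the standing assumption $\dE[L_T]=1$ (with $\psi\in L^2([0,T])$ almost surely, together with the usual localization to pass from pathwise square integrability to the integrability needed to run the argument), that theorem yields that $\wt W_t=W_t-\int_0^t\psi_s\,ds$ is an $\dF$-Brownian motion under $Q$, where $\frac{dQ}{dP}=L_T$.

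Finally, I would substitute the deterministic identity into the representation \eqref{ecu1} of $B^H$ and combine the two integrals:
\[
\wt B_t^H=B_t^H-\int_0^t u_s\,ds=\int_0^t K_H(t,s)\,dW_s-\int_0^t K_H(t,s)\,\psi_s\,ds=\int_0^t K_H(t,s)\,d\wt W_s,
\]
where the last step uses $d\wt W_s=dW_s-\psi_s\,ds$ together with a stochastic Fubini argument to justify the interchange. Since $\wt W$ is an $\dF$-Brownian motion under $Q$ and $\wt B^H$ is given by $\int_0^\bullet K_H(\bullet,s)\,d\wt W_s$, the process $\wt B^H$ has the law of a fractional Brownian motion with Hurst parameter $H$ under $Q$ and its driving Brownian motion $\wt W$ is an $\dF$-Brownian motion; by the definition of an $\dF$-fractional Brownian motion this is exactly the desired conclusion. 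I expect the deterministic identity of the second paragraph, rather than the invocation of classical Girsanov, to be the main obstacle, since it requires a precise dictionary between the operator $K_H$ and its kernel and a careful treatment of the endpoint regularity.
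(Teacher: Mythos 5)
Your proposal is correct and follows essentially the same route as the paper: the paper cites the classical Girsanov theorem for $W$ applied to $\psi$ and then uses the identity $\int_0^t u_s\,ds=(K_H\psi)_t=\int_0^t K_H(t,s)\psi_s\,ds$ (which holds by the very definition $\psi=K_H^{-1}(\int_0^\bullet u_s\,ds)$) to rewrite $\wt B^H_t=\int_0^t K_H(t,s)\,d\wt W_s$, exactly as you do. The only difference is emphasis: what you flag as the ``technical heart'' is, in the paper's setup, immediate from the definition of $K_H^{-1}$ together with the identification of the operator $K_H$ with its kernel, and the final step needs no stochastic Fubini since $K_H(t,\cdot)$ is deterministic.
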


Notice that in Theorem \ref{thm1},  by \eqref{pG}   we have
\begin{align*}
	\psi_t & = t^{H-\frac{1}{2}} I_{0+}^{\frac{1}{2} - H} t^{\frac{1}{2} - H} u_t, \quad \text{ if } H <\frac12, \\
	\psi_t & = t^{H-\frac{1}{2}} D_{0+}^{H-\frac{1}{2} } t^{\frac{1}{2} - H} u_t, \quad \text{ if } H >\frac12.
	\end{align*}
	for every $t \in (0,T]$.  If $H=\frac 12$, then $\psi_t=u_t$ for all $t\in [0,T]$. 
	Moreover, under $Q$, 
\[
 \wt{W}_t = W_t  -\int_0^t \psi_s ds 
 \]
 is an $\dF$-Brownian motion and
 the shifted process $(\wt{B}^H_t)_{t \in [0, T]}$ can be expressed as
	\begin{align*}
	\wt{B}^H_t & = \int_0^t K_H(t,s) dW_s - \int_0^t u_s ds \\
	& = \int_0^t K_H(t,s) d{W}_s- (K_H\psi)_t\\
	&= \int_0^t K_H(t,s) d{W}_s-  \int_0^t K_H(t,s) \psi_s ds \\
	&=\int_0^t K_H(t,s) d\wt{W}_s.
	\end{align*}

We state the following Girsanov theorem, whose proof is similar to  that of Theorem \ref{thm1} above.
\begin{theorem}\label{GT2}
	Let $(u_t)_{t \in [0, T]}$, $(v_t)_{t \in [0, T]}$ be $\dF$-adapted processes with integrable paths 
	such that  $\int_0^\bullet u_s ds \in  I_{0+}^{H_1+\frac{1}{2}} ( L^2 ([0,T]) )$ and $\int_0^\bullet v_s ds \in  I_{0+}^{H_2+\frac{1}{2}} ( L^2 ([0,T]) )$ 	 almost surely. Suppose that
	\[
	\psi_t:= \left( K_{H_1}^{-1} \int_0^\cdot u_r dr \right) (t)=\left( K_{H_2}^{-1} \int_0^\cdot v_r dr \right) (t),
	 \]
	for every $t \in (0,T]$.
	If
\[
L_T:= \exp \left( \int_0^T\psi_s dW_s - \frac12 \int_0^T \psi_s^2 ds\right)
 \]
 satisfies $\dE[L_T]=1$, then, under the probability measure $Q$ defined by $\frac{dQ}{dP} = L_T$, the processes $(\wt{B}^{H_1}_t)_{t \in [0, T]}$, $(\wt{B}^{H_2}_t)_{t \in [0, T]}$ with
	\[
	 \wt{B}^{H_1}_t = {B}^{H_1}_t  -\int_0^t u_s ds \quad \text{and} \quad \wt{B}_t^{H_2} = B_t^{H_2} -\int_0^t v_s ds, \quad t \in [0,T],
	 \]
	are $\dF$-fractional Brownian motions with Hurst indices $H_1$ and $H_2$, respectively. Moreover, there exists a standard Brownian motion $(\wt{W}_t)_{t \in [0, T]}$ with respect to $Q$ such that for every $t \in [0,T]$ it holds
	\begin{align*}
	\wt{B}_t^{H_1} = \int_0^t K_{H_1}(t,s) d\wt{W}_s \quad \text{ and } \quad \wt{B}_t^{H_2} = \int_0^t K_{H_2}(t,s) d\wt{W}_s.
	\end{align*}
\end{theorem}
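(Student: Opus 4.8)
The plan is to reduce the statement to the classical Girsanov theorem for Brownian motion (the first theorem of Subsection~2.3), exploiting the fact that, by hypothesis, a \emph{single} process $\psi$ simultaneously represents both drifts through the two inverse kernels $K_{H_1}^{-1}$ and $K_{H_2}^{-1}$. This common $\psi$ is exactly what forces both shifted noises to be driven by one and the same $Q$-Brownian motion $\wt{W}$, which is the conclusion that distinguishes this statement from two unrelated applications of Theorem~\ref{thm1}.

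First I would check that $\psi$ satisfies the hypotheses of the classical Girsanov theorem. Since $u$ and $v$ are $\dF$-adapted and the operators $K_{H_i}^{-1}$ in \eqref{pG} are built from the causal Riemann--Liouville operators $I_{0+}^\alpha$ and $D_{0+}^\alpha$, the value $\psi_t$ depends only on $\{u_r : r\le t\}$ (equivalently $\{v_r : r\le t\}$), so $\psi$ is $\dF$-adapted and, after verifying the required square-integrability $\dE\big[\int_0^t \psi_s^2\,ds\big]<\infty$, the assumption $\dE[L_T]=1$ lets me apply the classical Girsanov theorem. I conclude that
\[
\wt{W}_t = W_t - \int_0^t \psi_s\,ds, \quad t\in[0,T],
\]
is an $\dF$-Brownian motion under the measure $Q$ with $\frac{dQ}{dP}=L_T$.

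Next I would carry out the two kernel computations in parallel, exactly as in the display following Theorem~\ref{thm1}. For the first noise, the identity $\psi = K_{H_1}^{-1}\big(\int_0^\bullet u_r\,dr\big)$ gives $\int_0^\bullet u_r\,dr = K_{H_1}\psi$, and representing $K_{H_1}\psi$ through its kernel yields $\int_0^t u_s\,ds = \int_0^t K_{H_1}(t,s)\,\psi_s\,ds$. Substituting into the definition of $\wt{B}^{H_1}$ and using \eqref{ecu1},
\[
\wt{B}^{H_1}_t = \int_0^t K_{H_1}(t,s)\,dW_s - \int_0^t K_{H_1}(t,s)\,\psi_s\,ds = \int_0^t K_{H_1}(t,s)\,d\wt{W}_s.
\]
The identical argument with $v$ and $K_{H_2}^{-1}$ gives $\wt{B}^{H_2}_t = \int_0^t K_{H_2}(t,s)\,d\wt{W}_s$. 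Because $\wt{W}$ is a standard $Q$-Brownian motion and both representations use this \emph{same} $\wt{W}$, the characterization \eqref{ecu1} shows that $\wt{B}^{H_1}$ and $\wt{B}^{H_2}$ are $\dF$-fractional Brownian motions under $Q$ with Hurst parameters $H_1$ and $H_2$, driven by the common Brownian motion $\wt{W}$.

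The main obstacle is not any single hard estimate but the careful justification of the kernel identity $\big(K_{H_i}\psi\big)_t = \int_0^t K_{H_i}(t,s)\,\psi_s\,ds$ and of the adaptedness of $\psi$ at the required level of rigor, since these rest on the mapping properties of $K_{H_i}$ and $K_{H_i}^{-1}$ between $L^2([0,T])$ and $I_{0+}^{H_i+\frac12}(L^2([0,T]))$ and on the compatibility of the fractional operators with the filtration $\dF$. Once these points are settled, the argument is essentially a doubling of the computation underlying Theorem~\ref{thm1}, with the single hypothesis $K_{H_1}^{-1}\int_0^\cdot u_r\,dr = K_{H_2}^{-1}\int_0^\cdot v_r\,dr$ supplying the common $\wt{W}$.
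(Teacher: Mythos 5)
Your proposal is correct and follows essentially the same route as the paper, which omits a detailed proof of Theorem~\ref{GT2} and merely remarks that it is analogous to Theorem~\ref{thm1}: one applies the classical Girsanov theorem to the common $\psi$ to obtain the $Q$-Brownian motion $\wt{W}$, and then repeats, for each Hurst index, exactly the kernel computation displayed after Theorem~\ref{thm1} to get $\wt{B}^{H_i}_t=\int_0^t K_{H_i}(t,s)\,d\wt{W}_s$. Your observation that the single hypothesis $K_{H_1}^{-1}\int_0^\cdot u_r\,dr = K_{H_2}^{-1}\int_0^\cdot v_r\,dr$ is what forces both shifted processes to be driven by the same $\wt{W}$ is precisely the point of the theorem.
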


\section{Existence of a weak solution}

Consider the stochstic differential equation
\begin{align}\label{one2}
X_t = x_0+ \int_0^t b(s,X_s) ds + B^{H_1}_t + B_t^{H_2},  \quad t\in [0,T],
\end{align}
where $B^{H_1}=(B^{H_1}_t)_{t \in [0, T]}$ and $B^{H_2}=(B^{H_2}_t)_{t \in [0, T]}$  are two fractional Brownian motions with different Hurst parameters $H_1, H_2 \in (0,1)$ and $x_0\in \mathbb{R}$. We assume that there is a Brownian motion $W=(W_t)_{t\in 0,T]}$, such that 
\begin{align*}
B_t^{H_1}  = \int_0^t K_{H_1}(t,s) dW_s \quad \text{ and } \quad
B_t^{H_2}  = \int_0^t K_{H_2}(t,s) dW_s.
\end{align*}
Moreover, $b\colon [0,T] \times \rr \to \rr$ is a  measurable function.
 
\begin{definition}
  A weak solution to the stochastic differential equation \eqref{one2} is defined as a triple $({B}^{H_1},{B}^{H_2},X)$ on a filtered probability space $(\Omega, \mathbb{F}, (\mathbb{F}_t)_{t \in [0, T]}, P)$ such that the following conditions hold:
\begin{itemize}
	\item [(i)] There exists an $\dF$-Brownian motion $W=(W_t)_{t \in [0, T]}$   such that
	\[
	B_t^{H_1}  = \int_0^t K_{H_1}(t,s) dW_s \quad \text{ and } \quad
	B_t^{H_2}  = \int_0^t K_{H_2}(t,s) dW_s , \quad t\in [0,T].
	\]
	\item [(ii)] ${B}^{H_1},$ ${B}^{H_2}$ and $X$ satisfy \eqref{one2}.
\end{itemize}
\end{definition}

The purpose  of this section is to prove existence of a weak solution
 under some assumptions on the function $b$ depending on the value of the Hurst indices. In particular, we will show that the regularization effect of the fractional Brownian motion with the smaller Hurst index dominates.     
 We will impose one of the following two conditions:
  
\begin{itemize}
	\item [(A1)] The function $b$ satisfies a linear growth condition, i.e. there exists $c \in (0, \infty)$ such that for all $(t,x) \in [0,T] \times \rr$ we have
	\[
	 |b(t,x)| \leq c (1+|x|).
	 \]
	\item [(A2)] Let $H = \min (H_1, H_2)>\frac 12$. The function $b$ satisfies the following H\"older condition: there exists $c \in (0, \infty)$ such that for all $(t,x), (s,y) \in [0,T] \times \rr$  we have
\[
 |b(t,x) - b(s,y)| \leq c (|x-y|^\beta + |t-s|^\gamma),
 \]
	where $\beta \in (1-\frac1 {2H}, 1)$ and $\gamma>H-\frac12$. 
\end{itemize}
 
 The main result is the following one.
 
 \begin{theorem}\label{weak2}
	There exists a weak solution to equation \eqref{one2} if one of the following conditions holds:
	\begin{itemize}
		\item [\textnormal{(i)}] $\min (H_1, H_2) \le \frac12$ and \textnormal{(A1)} is true.
		\item [\textnormal{(ii)}] $\min (H_1, H_2) > \frac12$ and \textnormal{(A2)} is true.
	\end{itemize}
\end{theorem}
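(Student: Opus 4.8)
The plan is to produce the weak solution by Girsanov's theorem, following the strategy of Nualart and Ouknine: rather than solving \eqref{one2} directly, I start from the driftless process and absorb the drift into the two noises by an equivalent change of measure. Concretely, on $(\Omega,\dF,(\dF_t),P)$ with $B^{H_1},B^{H_2}$ driven by the common Brownian motion $W$, set $X_t := x_0 + B^{H_1}_t + B^{H_2}_t$ and look for a measure $Q\sim P$ under which the shifted processes $\widetilde B^{H_i}$ are fractional Brownian motions and $X$ solves \eqref{one2}. Matching the two representations of $X$ shows that it suffices to find $\dF$-adapted processes $u,v$ with $u_s+v_s=b(s,X_s)$ satisfying the compatibility condition of Theorem \ref{GT2}, namely $K_{H_1}^{-1}\!\big(\int_0^\cdot u\big)=K_{H_2}^{-1}\!\big(\int_0^\cdot v\big)=:\psi$. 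Once such $\psi\in L^2([0,T])$ is produced and the density $L_T$ is shown to have expectation one, Theorem \ref{GT2} delivers the solution, since then $X_t = x_0+\int_0^t b(s,X_s)\,ds+\widetilde B^{H_1}_t+\widetilde B^{H_2}_t$ with $\widetilde B^{H_1},\widetilde B^{H_2}$ fractional Brownian motions driven by a common $Q$-Brownian motion $\widetilde W$.

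The analytic heart of the argument is the construction of $\psi$. Writing $g(t):=\int_0^t b(s,X_s)\,ds$ and using $\int_0^\cdot u = K_{H_1}\psi$, $\int_0^\cdot v = K_{H_2}\psi$, the constraint $u+v=b(\cdot,X_\cdot)$ becomes the single operator equation
\[
(K_{H_1}+K_{H_2})\psi = g .
\]
I would solve this by exploiting the regularization domination of the smaller index. Assuming without loss of generality $H:=\min(H_1,H_2)=H_1$, I factor $K_{H_1}^{-1}(K_{H_1}+K_{H_2})=I+K_{H_1}^{-1}K_{H_2}$; since $K_{H_2}$ maps $L^2$ into $I_{0+}^{H_2+\frac12}(L^2)\subset I_{0+}^{H_1+\frac12}(L^2)$, the operator $K_{H_1}^{-1}K_{H_2}$ is a smoothing (Volterra) operator of positive order $H_2-H_1$ on $L^2([0,T])$. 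On a short interval $[0,\delta]$ its operator norm is $O(\delta^{H_2-H_1})$, so $I+K_{H_1}^{-1}K_{H_2}$ is invertible there by a Neumann series, and the causal (lower-triangular) structure lets me patch the local inverses over $[0,T]$. Hence $\psi=(I+K_{H_1}^{-1}K_{H_2})^{-1}K_{H_1}^{-1}g$, which is $\dF$-adapted because every operator involved is causal. The point is that solvability and the regularity requirements are entirely governed by $K_{H_1}^{-1}$, i.e.\ by the smaller Hurst index.

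It remains to check that $\psi\in L^2([0,T])$ almost surely under each hypothesis, which is where the dichotomy on $b$ enters through the explicit form \eqref{pG} of $K_H^{-1}$. In case (i), $H\le\frac12$, and $K_H^{-1}$ acts through the fractional integral $I_{0+}^{\frac12-H}$, which is smoothing; since $g'(s)=b(s,X_s)$ has linear growth by (A1) and $X$ is Gaussian with finite moments, the resulting $\psi$ is controlled by $(1+\sup_{s\le T}|X_s|)$ times a fixed $L^2$ function, giving $\psi\in L^2$. In case (ii), $H>\frac12$, and $K_H^{-1}$ involves the fractional derivative $D_{0+}^{H-\frac12}$, which requires $s\mapsto b(s,X_s)$ to be H\"older continuous of order $>H-\frac12$. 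This is exactly what (A2) guarantees: $X$ is H\"older continuous of any order $<H$, so by the H\"older assumption on $b$ the map $s\mapsto b(s,X_s)$ is H\"older of order $\min(\beta H,\gamma)$, and the conditions $\beta>1-\frac1{2H}$ (equivalently $\beta H>H-\frac12$) and $\gamma>H-\frac12$ force this exponent to exceed $H-\frac12$.

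Finally, I would verify $\dE[L_T]=1$. In both cases $\psi$ is dominated by $(1+\sup_{s\le T}|X_s|)$ times a deterministic $L^2$ function, so $\int_0^T\psi_s^2\,ds\le C(1+\sup_{s\le T}|X_s|)^2$; by Fernique's theorem $\exp(\lambda\sup_s|X_s|^2)$ is integrable for small $\lambda$, so Novikov's condition holds on a short interval, and the full statement $\dE[L_T]=1$ follows by iterating the change of measure over a partition of $[0,T]$ and using the tower property. With $\dE[L_T]=1$ in hand, Theorem \ref{GT2} applies and $(\widetilde B^{H_1},\widetilde B^{H_2},X)$ on $(\Omega,\dF,(\dF_t),Q)$ is the desired weak solution. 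I expect the principal obstacle to be the operator equation $(K_{H_1}+K_{H_2})\psi=g$: establishing invertibility of the sum $K_{H_1}+K_{H_2}$ and the sharp $L^2$ bound on $\psi$ under the minimal regularity of $b$ is the genuinely new difficulty compared with the single-noise case of \cite{NO}.
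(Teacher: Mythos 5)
Your proposal follows essentially the same route as the paper: a Girsanov change of measure for the pair of fractional Brownian motions via Theorem \ref{GT2}, with the compatibility condition reduced to inverting $I+K_{H_1}^{-1}K_{H_2}$ (the smaller Hurst index factored out) by a Neumann series, the dichotomy (A1)/(A2) entering exactly through the fractional integral versus derivative in \eqref{pG}, and Novikov's condition verified by a sup-norm bound plus Fernique and a partition argument. The difference is only presentational: the paper establishes the convergence of that Neumann series concretely, case by case, through explicit $\Gamma$-function estimates on the weighted compositions $t^{a}I_{0+}^{\alpha}t^{b}D_{0+}^{\beta}t^{c}$ (Lemmas \ref{bound1}--\ref{bound4}), which is precisely the computational content you flag as the remaining work.
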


\begin{proof}
Let $W=(W_t)_{t\in 0,T]}$ be a standard Brownian motion defined on a probability space $(\Omega, \mathbb{F}, P)$. We denote by  $(\dF_{t} )_{t\in [0,T]}$ the natural filtration generated by $W$. Define  the fractional Brownian motions 
  $B^{H_1}=(B^{H_1}_t)_{t \in [0, T]}$ and $B^{H_2}=(B^{H_2}_t)_{t \in [0, T]}$ by
\begin{align*}
B_t^{H_1}  = \int_0^t K_{H_1}(t,s) dW_s \quad \text{ and } \quad
B_t^{H_2}  = \int_0^t K_{H_2}(t,s) dW_s.
\end{align*}
 Suppose that we can find two  $\dF$-adapted  processes  $(u_t)_{t \in [0, T]}$ and  $(v_t)_{t \in [0, T]}$ with integrable paths  such that     $\int_0^\bullet u_s ds \in  I_{0+}^{H_1+\frac{1}{2}} ( L^2 ([0,T]) )$ and $\int_0^\bullet v_s ds \in  I_{0+}^{H_2+\frac{1}{2}} ( L^2 ([0,T]) )$ 	 almost surely and the following conditions are satisfied:
 \begin{itemize}
 \item[(i)] For all $t\in [0,T]$, we have
 \[
 u_t+v_t = b\left( t, x_0 + B_t^{H_1} + B_t^{H_2}\right)
 \]
 \item[(ii)]  For all $t\in [0,T]$ we have
  \[
  \psi_t:= \left( K_{H_1}^{-1} \int_0^\bullet u_r dr \right) (t)=\left( K_{H_2}^{-1} \int_0^\bullet v_r dr \right) (t)
	\]
 and the process $(\psi_t)_{t \in [0, T]}$  
	satisfies the Novikov condition
	\begin{equation} \label{Nov}
	 \mathbb{E} \left[ \exp \left( \int_0^t \frac12 \int_0^t \psi_s^2 ds\right) \right] < \infty, \quad t \in (0,T].
	 \end{equation}
	 \end{itemize}
Then,  $u,v$ satisfy the conditions of Theorem \ref{GT2} and we can consider  the probability measure $Q$ as given in Theorem \ref{GT2}. In this context, setting $(\wt{B}^{H_1}_t)_{t \in [0, T]}$,  $(\wt{B}^{H_2}_t)_{t \in [0, T]}$ and $X=(X_t)_{t\in [0,T]}$, with
\[
 \wt{B}^{H_1}_t = {B}^{H_1}_t  -\int_0^t u_s ds \quad \text{and} \quad \wt{B}_t^{H_2} = B_t^{H_2} -\int_0^t v_s ds, \quad t \in [0,T],
 \]
 and $X_t= x_0+ B^{H_1} _ t+B^{H_2} _ t$, 
the triple $(\wt{B}^{H_1},\wt{B}^{H_2},  X)$ is a weak solution to equation \eqref{one2} on the filtered probability space
$(\Omega, \mathbb{F}, (\mathbb{F}_t)_{t \in [0, T]}, Q)$. In fact, for every $t \in [0, T]$ we have
\begin{align*}
X_t &= x_0+ B^{H_1}_t + B^{H_2}_t = \wt{B}^{H_1}_t + \wt{B}^{H_2}_t + \int_0^t \left( u_s + v_s \right) ds \\
& =x_0+ \wt{B}^{H_1}_t +   \wt{B}^{H_2}_t + \int_0^t  b(s, B_s^{H_1} + B_s^{H_2}) ds.
\end{align*}
Furthermore, by our assumption (i) and by construction,   there exists a standard  $\dF$-Brownian motion $\wt{W}=(\wt{W}_t)_{t \in [0, T]}$ with respect to $Q$ such that
\[
\wt{B}_t^{H_1}  = \int_0^t K_{H_1}(t,s) d\wt{W}_s \quad \text{ and } \quad
\wt{B}_t^{H_2}  = \int_0^t K_{H_2}(t,s) d\wt{W}_s , \quad t\in [0,T].
\]
Notice that if $H_1=\frac 12$, then in the above construction $B^{H_1} =W$ and $\psi=u$.

To complete the proof it remains to show the existence of  processes $u$ and $v$ satisfying the above conditions (i) and (ii).
The proof of this fact will be decomposed into several cases.

 \subsection{Case $H_1=\frac 12$}
 Suppose $H_1=\frac 12$, that is, $B^{H_1} =W$ and  put $H_2=H$ to simplify the presentation.
   First assume that $H<\frac12$. Define $\alpha = \frac12 - H$. Let $(v_t)_{t \in [0, T]}$ be the process defined by
	\begin{align*}
	v_t &= b(t, x_0+W_t + B_t^H) + t^{-\alpha}\sum_{k=1}^{\infty}  \frac{(-1)^k}{\Gamma(k\alpha)} \int_0^t (t-s)^{k\alpha -1} s^{\alpha} b(s, x_0+W_s + B_s^H) ds  \\
	& =  t^{-\alpha}\sum_{n=0}^\infty (-1)^n I_{0+}^{n\alpha}  \big( t^\alpha b(t, x_0+W_t + B_t^H)\big) \\
	& = t^{-\alpha}[1+ I_{0+}^{\alpha}]^{-1} \big( t^\alpha b(t, x_0+W_t + B_t^H)\big)
	\end{align*}
	and
	let $(\psi_t)_{t \in [0, T]}$ be given by
	\[
	 \psi_t=t^{-\alpha} I_{0+}^{\alpha} t^{\alpha} v_t = t^{-\alpha} \frac{1}{\Gamma (\alpha)} \int_0^t (t-s)^{\alpha-1} s^{\alpha} v_s ds 
	 \]
	for every $t \in (0,T]$.  Clearly, $\psi_t + v_t=  b(t, x_0+ W_t+ B_t^H)$.
	Using the assumption that $b$ is of linear growth it is not difficult to prove that Novikov's condition \eqref{Nov} is fulfilled, so that 
	the processes $u:=\psi$ and $v$ satisfy the above properties (i) and (ii).
	
	 Now suppose that $H>\frac12$. Define $\alpha = H-\frac12$. Here, we define the processes $(\psi_t)_{t \in [0, T]}$ and $(v_t)_{t \in [0, T]}$ by
	\begin{align*}
	\psi_t = t^{\alpha} \sum_{k=1}^{\infty}  \frac{(-1)^k}{\Gamma(k\alpha)} \int_0^t (t-s)^{k\alpha -1} s^{-\alpha} b(s, x_0+W_s + B_s^H) ds +  b(t, x_0+W_t + B_t^H) 
	\end{align*}
	and
	\[
	v_t = t^{\alpha} \frac{1}{\Gamma (\alpha )} \int_0^t (t-s)^{\alpha-1} s^{-\alpha} \psi_s ds 
	\]
	for every $t \in (0,T]$. Then arguing as in the case $H<\frac12$ above we obtain a weak solution to \eqref{one2}. This finishes the proof in the present case.

 \subsection{Case $\min (H_1, H_2) < \frac12$ and $\max (H_1, H_2) > \frac12$}
Without loss of generality assume that $H_1 < H_2$, i.e. $H_1 < \frac12$ and $H_2 > \frac12$. Define $\alpha_1 = \frac12 - H_1$, $\alpha_2 = H_2-\frac12$. From now on, we let $b_t :=  b(t, x_0+B_t^{H_1} + B_t^{H_2})$, $t \in [0,T]$. In this case we define the processes $u,v$ by
\begin{align}  \notag
u_t & = b_t + t^{-\alpha_1}\sum_{n=1}^{\infty} (-1)^n \left[ t^{\alpha_1+\alpha_2} (I_{0+}^{\alpha_2} [t^{-\alpha_1-\alpha_2} I_{0+}^{\alpha_1}])\right]^n  \left( t^{\alpha_1} b_t \right) \\  \label{ecu3}
& = t^{-\alpha_1}\sum_{n=0}^{\infty} (-1)^n \left[ t^{\alpha_1+\alpha_2} (I_{0+}^{\alpha_2} [t^{-\alpha_1-\alpha_2} I_{0+}^{\alpha_1}])\right]^n  \left( t^{\alpha_1} b_t \right) \\  \notag
&= t^{-\alpha_1}\left[ 1+t^{\alpha_1+\alpha_2} I_{0+}^{\alpha_2} [t^{-\alpha_1-\alpha_2} I_{0+}^{\alpha_1}]\right] ^{-1}   \left( t^{\alpha_1} b_t   \right)
\end{align}
and
\[
v_t = t^{\alpha_2} I_{0+}^{\alpha_2} [t^{-\alpha_1-\alpha_2} I_{0+}^{\alpha_1} (t^{\alpha_1} u_t)]
\]
for every $t \in (0,T]$.  It is easy to see that $u_t + v_t=  b(t, x_0+ W_t+ B_t^H)$.  By \eqref{pG} and  the definition of the process $v$ we have
\[
 \Big( K_{H_1}^{-1} \int_0^\bullet u_r dr \Big) (t)=\Big( K_{H_2}^{-1} \int_0^\bullet v_r dr \Big) (t),
 \]
for every $t \in (0,T]$. Set
\[
\psi_t := \left( K_{H_1}^{-1} \int_0^\bullet u_r dr \right) (t) , \quad t \in (0,T].
\]
It now suffices to show that $(\psi_t)_{t \in (0,T]}$ satisfies Novikov's condition \eqref{Nov}. Using Lemma \ref{bound1} below  we get that
\[
 |u_t| \leq c_T \left( 1+ \| B^{H_1} + B^{H_2} \|_\infty \right) , \quad t \in (0,T],
 \]
  where $\|\cdot\|$ denotes the supremum norm on $[0,T]$.
Then the proof of Novikov's condition follows the lines of the proof of \cite[Theorem 3]{NO} on page 109, completing the proof of Theorem \ref{weak2} in the present case. \hfill $\Box$
\begin{lemma} \label{bound1}
	  It holds
	\[
	 |u_t| \leq c_T \left( 1+ \| B^{H_1} + B^{H_2} \|_\infty \right), \quad t \in (0,T],
	 \]
	for some constant $c_T \in (0, \infty)$ which depends on $T$, $H_1$, $H_2$, and the constant $c$ appearing in the linear growth condition (A1).
\end{lemma}

\begin{proof}
	Let $t\in (0,T]$. Note first that, using the linear growth condition  (A1), we obtain
	\begin{align*}
	& \left| \left[ t^{\alpha_1+\alpha_2} (I_{0+}^{\alpha_2} [t^{-\alpha_1-\alpha_2} I_{0+}^{\alpha_1}])\right]^n  \left( t^{\alpha_1} b(t, B_t^{H_1} + B_t^{H_2}) \right) \right| \leq  c\left( 1+ \| B^{H_1} + B^{H_2} \|_\infty \right) \\
	& \quad \times \left[ t^{\alpha_1+\alpha_2} (I_{0+}^{\alpha_2} [t^{-\alpha_1-\alpha_2} I_{0+}^{\alpha_1}])\right]^n  ( t^{\alpha_1}  ).
	\end{align*}
We will show, by induction, that for all $n\in \N$, $n \geq 2$,
	\begin{equation} 
	  \left[ t^{\alpha_1+\alpha_2} (I_{0+}^{\alpha_2} [t^{-\alpha_1-\alpha_2} I_{0+}^{\alpha_1}])\right]^n  ( t^{\alpha_1})   =
	   \frac {\Gamma(\alpha_1-\alpha_2+1)}  {\Gamma((n+1) \alpha_1+ (n-1)\alpha_2+1)}
	t^{ (n+1)\alpha_1+ n\alpha_2}.
	 \label{pb1}
	\end{equation} 
	Using the equality
	\[
	  I_{0+}^{\alpha} [t^{\beta}] =   \frac { \Gamma(\beta+1)}{\Gamma (\alpha+\beta+1)}  t^{\alpha+\beta} 
\]
	we obtain, for every $\beta >\alpha_2$, 
	\begin{align}   \label{ecu2}
	\left[ t^{\alpha_1+\alpha_2} (I_{0+}^{\alpha_2} [t^{-\alpha_1-\alpha_2} I_{0+}^{\alpha_1}])\right]  ( t^{\beta}  ) & = 
	  \frac { \Gamma(\beta+1)}{\Gamma (\alpha_1+\beta+1)} 
	    t^{\alpha_1+\alpha_2} (I_{0+}^{\alpha_2} [t^{\beta-\alpha_2}]) =  \frac  {\Gamma(\beta-\alpha_2+1)}{\Gamma (\alpha_1+\beta+1)}   
	    t^{\alpha_1+ \alpha_2+\beta}.
	\end{align}
	In particular, taking $\beta=\alpha_1$ in \eqref{ecu2}, we get
\[
	\left[ t^{\alpha_1+\alpha_2} (I_{0+}^{\alpha_2} [t^{-\alpha_1-\alpha_2} I_{0+}^{\alpha_1}])\right]  ( t^{\alpha_1}  )  =  \frac  {\Gamma(\alpha_1-\alpha_2+1)}{\Gamma (2\alpha_1+1)}   
	    t^{2\alpha_1+\alpha_2}.
\]
Thus,  we get
	\begin{align*}
	\left[ t^{\alpha_1+\alpha_2} (I_{0+}^{\alpha_2} [t^{-\alpha_1-\alpha_2} I_{0+}^{\alpha_1}])\right]^2  ( t^{\alpha_1}  )  =
	 \frac  {\Gamma(\alpha_1-\alpha_2+1)}{\Gamma (2\alpha_1+1)}    \left[ t^{\alpha_1+\alpha_2} (I_{0+}^{\alpha_2} [t^{-\alpha_1-\alpha_2} I_{0+}^{\alpha_1}])\right]  ( t^{2\alpha_1 + \alpha_2}  ) 
	\end{align*}
		and putting $\beta=2\alpha_1+\alpha_2$ in \eqref{ecu2}, yields
	\begin{align*}
	\left[ t^{\alpha_1+\alpha_2} (I_{0+}^{\alpha_2} [t^{-\alpha_1-\alpha_2} I_{0+}^{\alpha_1}])\right]^2  ( t^{\alpha_1}  )  =
	 \frac  {\Gamma(\alpha_1-\alpha_2+1)}{\Gamma (3\alpha_1+\alpha_2+1)}    ( t^{3\alpha_1 + 2\alpha_2}  ).
	\end{align*}
	which proves \eqref{pb1} for $n=2$.
	
	 Now assume that \eqref{pb1} holds for some $n \in \N$ with $n \geq 2$.  We can write
\begin{align*}
	 \left[ t^{\alpha_1+\alpha_2} (I_{0+}^{\alpha_2} [t^{-\alpha_1-\alpha_2} I_{0+}^{\alpha_1}])\right]^{n+1}  ( t^{\alpha_1})  
	 &= \frac {\Gamma(\alpha_1-\alpha_2+1)}  {\Gamma((n+1) \alpha_1+ (n-1)\alpha_2+1)} \\
	& \times  \left[ t^{\alpha_1+\alpha_2} (I_{0+}^{\alpha_2} [t^{-\alpha_1-\alpha_2} I_{0+}^{\alpha_1}])\right] (t^{ (n+1)\alpha_1+ n\alpha_2}).
	 \end{align*}
 Applying \eqref{ecu2} with $ \beta=(n+1)\alpha_1+ n\alpha_2$, we obtain \eqref{pb1} for $n+1$.
 
 Finally, using the representation of the process $u$ given in   \eqref{ecu3}, we obtain
 \begin{align*}
 |u_t| &  \le  c  \left( 1+ \| B^{H_1} + B^{H_2} \|_\infty \right) t^{-\alpha_1}  \sum_{n=0}^\infty  \frac {\Gamma(\alpha_1-\alpha_2+1)}  {\Gamma((n+1) \alpha_1+ (n-1)\alpha_2+1)}
	t^{ (n+1)\alpha_1+ n\alpha_2}\\
	&\le c_T  \left( 1+ \| B^{H_1} + B^{H_2} \|_\infty \right).
	\end{align*}
This completes the  proof of this Lemma.
\end{proof}

\subsection{Case $H_1 < \frac12$ and $H_2 <\frac12$}
Define $\alpha_1 = \frac12 - H_1$ and $\alpha_2 = \frac12-H_2$, without loss of generality we assume that $\alpha_2>\alpha_1$.   In this case we define the processes $u,v$ by
\begin{align}  \notag
v_t &=  b_t  + t^{-\alpha_1}\sum_{n=1}^{\infty} (-1)^n  \left(  D_{0+}^{\alpha_1}t^{\alpha_1-\alpha_2} I_{0+}^{\alpha_2} t^{\alpha_2-\alpha_1}\right)^n(t^{\alpha_1} b_t) \\  \label{ecu4}
& = t^{-\alpha_1} \sum_{n=0}^{\infty} (-1)^n \left(  D_{0+}^{\alpha_1}t^{\alpha_1-\alpha_2} I_{0+}^{\alpha_2} t^{\alpha_2-\alpha_1}\right)^n(t^{\alpha_1} b_t) \\  \notag
& = t^{-\alpha_1} \left[ 1+ D_{0+}^{\alpha_1}t^{\alpha_1-\alpha_2} I_{0+}^{\alpha_2} t^{\alpha_2-\alpha_1}\right] ^{-1}   \left( t^{\alpha_1} b_t  \right)
\end{align}
and
\[
 u_t = t^{-\alpha_1}D_{0+}^{\alpha_1}t^{\alpha_1-\alpha_2} I_{0+}^{\alpha_2} t^{\alpha_2} v_t
 \]
for every $t \in (0,T]$.  

Clearly, $u_t + v_t=  b(t, x_0+ W_t+ B_t^H)$.
By \eqref{pG} and definition of the process $u$ we have
\[
 \left( K_{H_1}^{-1} \int_0^\bullet u_r dr \right) (t)=\left( K_{H_2}^{-1} \int_0^\bullet v_r dr \right) (t),
 \]
for every $t \in (0,T]$. 
Now let
\begin{equation}  \label{ecu3}
\psi_t := \left( K_{H_2}^{-1} \int_0^\bullet v_r dr \right) (t) , \quad t \in (0,T].
\end{equation}
It suffices to show that $(\psi_t)_{t \in (0,T]}$ satisfies Novikov's condition. Let $t \in (0,T]$. 
From   \eqref{ecu3}, \eqref{pG} and  \eqref{ecu4},  and the  linearity of fractional integration, we can write
\begin{align*}
\psi_t &= t^{-\alpha_2} I_{0+}^{\alpha_2} t^{\alpha_2} (v_t)  \\
&= t^{-\alpha_2}
I_{0+}^{\alpha_2} t^{\alpha_2} ( b_t) + t^{-\alpha_2}  I_{0+}^{\alpha_2} t^{\alpha_2-\alpha_1}  \left(\sum_{n=1}^{\infty} (-1)^n  \left(  D_{0+}^{\alpha_1}t^{\alpha_1-\alpha_2} I_{0+}^{\alpha_2} t^{\alpha_2-\alpha_1}\right)^n(t^{\alpha_1} b_t) \right).
\end{align*}
Therefore,
\[
|\psi_t | \le  \left|  t^{-\alpha_2} I_{0+}^{\alpha_2} t^{\alpha_2} ( b_t) \right| + J_t,
\]
where
\[
J_t := \left|  t^{-\alpha_2} I_{0+}^{\alpha_2} t^{\alpha_2-\alpha_1}  \left(\sum_{n=1}^{\infty} (-1)^n  \left(  D_{0+}^{\alpha_1}t^{\alpha_1-\alpha_2} I_{0+}^{\alpha_2} t^{\alpha_2-\alpha_1}\right)^n(t^{\alpha_1} b_t) \right) \right|.
\]
In the following we only estimate $J_t$, since the other term $|  t^{-\alpha_2}I_{0+}^{\alpha_2} t^{\alpha_2} ( b_t) |$ is estimated as above.
Recall that $\alpha_2 - \alpha_1>0$. With the linearity of fractional integration and the triangle inequality we get
\[
J_t \leq \left(\sum_{n=1}^{\infty} \left|  t^{-\alpha_2} I_{0+}^{\alpha_2}t^{\alpha_2-\alpha_1} \left( D_{0+}^{\alpha_1}t^{\alpha_1-\alpha_2} I_{0+}^{\alpha_2} t^{\alpha_2-\alpha_1}\right)^n(t^{\alpha_1} b_t) \right| \right).
\]
For each   $n\in\N$ we are going to  estimate the term
\[
J_t^n := \left|  t^{-\alpha_2} I_{0+}^{\alpha_2}t^{\alpha_2-\alpha_1} \Big( D_{0+}^{\alpha_1}t^{\alpha_1-\alpha_2} I_{0+}^{\alpha_2} t^{\alpha_2-\alpha_1}\Big)^n(t^{\alpha_1} b_t) \right|.
\]
First observe that for a function $(g_t)_{t \in (0,T]}$, in view of formula  \eqref{we}, we have
\[ D_{0+}^{\alpha_1} t^{\alpha_1-\alpha_2} g_t= t^{\alpha_1-\alpha_2}D_{0+}^{\alpha_1}  g_t + \frac {\alpha_1}{ \Gamma(1-\alpha_1)} \int_0^t g_s \frac{t^{\alpha_1-\alpha_2}-s^{\alpha_1-\alpha_2}}{(t-s)^{\alpha_1+1} }ds.
\]
Thus,
\begin{align} \label{c2}
\begin{split}
J_t^n & \leq \left|   t^{-\alpha_2} I_{0+}^{\alpha_2-\alpha_1} I_{0+}^{\alpha_2}t^{\alpha_2-\alpha_1} \left( D_{0+}^{\alpha_1}t^{\alpha_1-\alpha_2} I_{0+}^{\alpha_2} t^{\alpha_2-\alpha_1}\right)^{n-1}(t^{\alpha_1} b_t)\right| \\
& +   
\frac {\alpha_1}{ \Gamma(1-\alpha_1)}
\left|  t^{-\alpha_2}  I_{0+}^{\alpha_2}t^{\alpha_2-\alpha_1} \int_0^t I_{0+}^{\alpha_2}s^{\alpha_2-\alpha_1} \left( D_{0+}^{\alpha_1}s^{\alpha_1-\alpha_2} I_{0+}^{\alpha_2} s^{\alpha_2-\alpha_1}\right)^{n-1}(s^{\alpha_1} b_s) \frac{t^{\alpha_1-\alpha_2} - s^{\alpha_1-\alpha_2}}{(t-s)^{\alpha_1+1}} ds\right| \\
& \leq   t^{-\alpha_2}  I_{0+}^{\alpha_2-\alpha_1}  t^{\alpha_2}J_t^{n-1} + \frac {\alpha_1}{ \Gamma(1-\alpha_1)}  t^{-\alpha_2}  I_{0+}^{\alpha_2}t^{\alpha_2-\alpha_1} \int_0^t s^{\alpha_2}  J_s^{n-1} \frac{ |t^{\alpha_1-\alpha_2} - s^{\alpha_1-\alpha_2}|  }{(t-s)^{\alpha_1+1}} ds \\
&\le    I_{0+}^{\alpha_2-\alpha_1}  J_t^{n-1} +  \frac {\alpha_1}{ \Gamma(1-\alpha_1)}   I_{0+}^{\alpha_2}t^{-\alpha_1} \int_0^t s^{\alpha_2}  J_s^{n-1} \frac{|t^{\alpha_1-\alpha_2} - s^{\alpha_1-\alpha_2}|}{(t-s)^{\alpha_1+1}} ds,
\end{split}
\end{align}
with the convention that $J_t^0=  | t^{-\alpha_2}  I_{0+}^{\alpha_2} t^{\alpha_2} b_t|$.   Notice that
\[
J^0_t \le |   I_{0+}^{\alpha_2}   b_t| \le \left( \sup_{t\in [0,T]} |b_t|  \right) (I_{0+}^{\alpha_2} 1)_t= \frac {
t^{\alpha_2} }{\alpha_2\Gamma(\alpha_2)}  \sup_{t\in [0,T]} |b_t|    .
\]
We claim that  there exist constants $C$ and $\kappa$  such that
\begin{equation} \label{c1}
J_t^n \leq  C\kappa^{n-1} \left( \sup_{t\in [0,T]} |b_t|  \right)  \frac{t^{(n+1)\alpha_2 - n\alpha_1}}{\Gamma \left( (n+1)\alpha_2-n\alpha_1) \right)}
\end{equation}
for every $n\ge 1$. We prove this claim by induction. For $n=1$ we obtain from \eqref{c2} that
\begin{align*}
J_t^1 & \leq  \frac{1}{\alpha_2 \Gamma(\alpha_2)}\left( \sup_{t\in [0,T]}  |b_t|  \right) \left[     (I_{0+}^{2\alpha_2-\alpha_1}1)_t + \frac {\alpha_1}{ \Gamma(1-\alpha_1)}   I_{0+}^{\alpha_2}t^{-\alpha_1} \int_0^t s^{2\alpha_2} \frac{|t^{\alpha_1-\alpha_2} - s^{\alpha_1-\alpha_2}|}{(t-s)^{\alpha_1+1}} ds 
\right] \\
& \leq   \frac{1}{\alpha_2 \Gamma(\alpha_2)}\left( \sup_{t\in [0,T]} |b_t|  \right)
\left[ (I_{0+}^{2\alpha_2-\alpha_1}1)_t   +  c_1  I_{0+}^{\alpha_2} t^{\alpha_2 -\alpha_1}\right] \\
& \leq  C \left( \sup_{t\in [0,T]} |b_t|  \right)  t^{2\alpha_2-\alpha_1},
\end{align*}
where 
\[
c_1=  \frac {\alpha_1}{ \Gamma(1-\alpha_1)}\int_0^1 x^{2\alpha_2}  \frac { 1-x^{\alpha_1-\alpha_2} }
{(1-x) ^{\alpha_1+1}}dx
\]
and
\[
C= \frac{1}{\alpha_2 \Gamma(\alpha_2)} \Bigg( \frac 1{ \Gamma(2\alpha_2-\alpha_1+1)} +c_1  \frac { \Gamma(\alpha_1) \Gamma(\alpha_2-\alpha_1+1)}{\Gamma(\alpha_2+1)\Gamma(\alpha_2)} \Bigg).
\]
Therefore,   \eqref{c1} is true for $n=1$. Now assume that \eqref{c1} holds for some $n\in\N$. Then using \eqref{c2} again we get
\begin{align*}
J_t^{n+1} & \leq 
 \frac {C\kappa^{n-1} \left( \sup_{t\in [0,T]} |b_t|  \right)}
 {\Gamma \left( (n+1)\alpha_2-n\alpha_1) \right)}
   \left[   
 I_{0+}^{\alpha_2-\alpha_1} t^{(n+1)\alpha_2-n\alpha_1 } +  I_{0+}^{\alpha_2}t^{-\alpha_1} \int_0^t s^{(n+2)\alpha_2-n\alpha_1} \frac{|t^{\alpha_1-\alpha_2} - s^{\alpha_1-\alpha_2}|}{(t-s)^{\alpha_1+1}} ds  \right]\\
& \leq  \frac { C\kappa^{n-1} \left( \sup_{t\in [0,T]} |b_t|  \right)}
 {\Gamma \left( (n+1)\alpha_2-n\alpha_1) \right)}
   \left[  I_{0+}^{\alpha_2-\alpha_1} t^{(n+1)\alpha_2-n\alpha_1 }+ c_2 I_{0+}^{\alpha_2}t^{(n+1) (\alpha_2-\alpha_1)} \right],
   \end{align*}
   where
   \[
   c_2=  \frac{\alpha_1}{ \Gamma(1-\alpha_1)}\int_0^1 \frac { 1-x^{\alpha_1-\alpha_2}} { (1-x) ^{\alpha_1+1}}d x.
   \]
   Notice that
   \begin{align*}
   I_{0+}^{\alpha_2-\alpha_1} t^{(n+1)\alpha_2-n\alpha_1 }  &=
      \frac { \Gamma((n+1) \alpha_2 -n\alpha_1 +1)} { \Gamma((n+2) \alpha_2 - (n+1) \alpha_1+1)} t^{(n+2) \alpha_2- (n+1) \alpha_1} \\
     & \le  \frac { \Gamma((n+1) \alpha_2 -n\alpha_1 )} { \Gamma((n+2) \alpha_2 - (n+1) \alpha_1)} t^{(n+2) \alpha_2- (n+1) \alpha_1}
\end{align*}
and
    \begin{align*}
I_{0+}^{\alpha_2}t^{(n+1) (\alpha_2-\alpha_1)} & =
      \frac { \Gamma((n+1) (\alpha_2 -\alpha_1 )+1)} { \Gamma((n+2) \alpha_2 - (n+1) \alpha_1+1)} t^{(n+2) \alpha_2- (n+1) \alpha_1} \\
      &\le   \frac { \Gamma((n+1) \alpha_2 -n\alpha_1)} { \Gamma((n+2) \alpha_2 - (n+1) \alpha_1)} t^{(n+2) \alpha_2- (n+1) \alpha_1} 
\end{align*}
This implies
\[
J_t^{n+1}  \leq 
\frac {C \kappa^{n-1} \left( \sup_{t\in [0,T]} |b_t|  \right)}
 {\Gamma \left( (n+2)\alpha_2-(n+1)\alpha_1) \right)} (1+c_2)  t^{(n+2) \alpha_2- (n+1) \alpha_1} 
\]
proving \eqref{c1} for every $n\in\N$ with the constant $\kappa=  1+c_2$. Hence, altogether we obtain that the inequality
\[
 J_2 \leq c  t^{\alpha_2}\sup_{t\in [0,T]} |b_t|
 \]
is true for some constant $c \in (0, \infty)$. The rest of the proof of Novikov's condition follows exactly by the same argument as in the first case above. \hfill $\Box$

\subsection{Case $H_1 > \frac12$ and $H_2 >\frac12$}
Define $\alpha_1 = H_1-\frac12 $ and $\alpha_2 =H_2- \frac12$, and we assume without loss of generality that 
\begin{equation*}
\alpha_1-\alpha_2 >0, \quad \text{ i.e. } \quad H_1>H_2
\end{equation*}
holds. Recall that now we assume that (A2) holds. Before constructing a weak solution we first state the following, which will be made used of in proving that the assumptions of Theorem \ref{GT2} hold. As before, throughout this section we denote $b_t :=  b(t, B_t^{H_1} + B_t^{H_2})$ for every $t \in [0,T]$.
\begin{lemma}\label{bound2}
	Under the present conditions for $\alpha \in (0, \infty)$ it holds
	\[
	D_{0+}^{\alpha_2}(t^{-\alpha}b_t)\leq   G \left[ t^{-\alpha-\alpha_2} + t^{\gamma - \alpha_2 - \alpha} + t^{-\alpha+\beta(H_2 - \varepsilon)-\alpha_2} \right] , \quad t \in (0,T], 
	\]
	for every $\varepsilon \in (0, \infty)$  such that $\beta (H_2- \varepsilon) > H_2-\frac 12$ and some positive random variable $G$ with
	\begin{equation} \label{ecu8}
	\mathbb{E}[\exp (\lambda  G^{2})] <\infty\qquad   \text{for all} \quad \lambda >0.
		\end{equation}
	 In particular, for all $ t \in (0,T]$, we have
	\[
	D_{0+}^{\alpha_2}(t^{-\alpha}b_t)\leq  G (1+ T^\gamma +T^{\beta(H_2-\varepsilon)}) t^{-\alpha-\alpha_2}=: G_1t^{-\alpha-\alpha_2},
	\]
	where $G_1$ satisfies \eqref{ecu8}.
\end{lemma}
\begin{proof}
	The proof follows the calculations on \cite[p.109-110]{NO}.
\end{proof}

Now we define the processes $u,v$ by
\begin{align}  \notag
v_t &=  b_t + t^{\alpha_1}\sum_{n=1}^{\infty} (-1)^n  \left(  I_{0+}^{\alpha_1}t^{\alpha_2-\alpha_1} D_{0+}^{\alpha_2} t^{\alpha_1-\alpha_2}\right)^n(t^{-\alpha_1} b_t) \\  \label{ecu7}
& = t^{\alpha_1} \sum_{n=0}^{\infty} (-1)^n \left(  I_{0+}^{\alpha_1}t^{\alpha_2-\alpha_1} D_{0+}^{\alpha_2} t^{\alpha_1-\alpha_2}\right)^n(t^{-\alpha_1} b_t) \\  \notag
& = t^{\alpha_1} \left[ 1+ I_{0+}^{\alpha_1}t^{\alpha_2-\alpha_1} D_{0+}^{\alpha_2} t^{\alpha_1-\alpha_2}\right] ^{-1}   \left( t^{-\alpha_1} b_t  \right)
\end{align}
and
\[
 u_t = t^{\alpha_1}I_{0+}^{\alpha_1}t^{\alpha_2-\alpha_1} D_{0+}^{\alpha_2} t^{-\alpha_2} v_t
 \]
for every $t \in (0,T]$.   

Clearly, $u_t + v_t=  b(t, x_0+ W_t+ B_t^H)$. By \eqref{pG} and definition of the process $u$ we have
\[
 \psi_t:=\left( K_{H_1}^{-1} \int_0^\bullet u_r dr \right) (t)=\left( K_{H_2}^{-1} \int_0^\bullet v_r dr \right) (t),
 \]
for every $t \in (0,T]$. From  \eqref{ecu7} and \eqref{pG}, we can write
\[
 \psi_t =   t^{\alpha_2} D_{0+}^{\alpha_2} t^{-\alpha_2} \left( b_t +  t^{\alpha_1}\sum_{n=1}^{\infty} (-1)^n  \left(   I_{0+}^{\alpha_1}t^{\alpha_2-\alpha_1} D_{0+}^{\alpha_2} t^{\alpha_1-\alpha_2}\right)^n(t^{-\alpha_1} b_t) \right) 
 \]
for every $t \in (0,T]$. It remains to show that $(\psi_t)_{t \in (0,T]}$ satisfies Novikov's condition. Let $t \in (0,T]$ and write
$\psi_t =  t^{\alpha_2} D_{0+}^{\alpha_2} t^{-\alpha_2} b_t + K_t$
with
\begin{align*}
K_t&:= t^{\alpha_2} D_{0+}^{\alpha_2} t^{\alpha_1-\alpha_2} \left(\sum_{n=1}^{\infty} (-1)^n  \left(   I_{0+}^{\alpha_1}t^{\alpha_2-\alpha_1} D_{0+}^{\alpha_2} t^{\alpha_1-\alpha_2}\right)^n(t^{-\alpha_1} b_t) \right) \\
&=  \sum_{n=1}^{\infty} (-1)^n  t^{\alpha_2} D_{0+}^{\alpha_2} t^{\alpha_1-\alpha_2}\Big(   I_{0+}^{\alpha_1}t^{\alpha_2-\alpha_1} D_{0+}^{\alpha_2} t^{\alpha_1-\alpha_2}\Big)^n(t^{-\alpha_1} b_t).
\end{align*}
Using Lemma \ref{bound2}, we obtain
\begin{equation}  \label{a2}
|t^{\alpha_2} D_{0+}^{\alpha_2} t^{-\alpha_2} b_t | \le G t^{-\alpha_2},
\end{equation}
where  $G$ is a random variable that satisfies  \eqref{ecu8}.

We now estimate $|K_t^n|$, $n \in \N$, for
\[
K_t^n:= t^{\alpha_2} D_{0+}^{\alpha_2} t^{\alpha_1-\alpha_2}\Big(   I_{0+}^{\alpha_1}t^{\alpha_2-\alpha_1} D_{0+}^{\alpha_2} t^{\alpha_1-\alpha_2}\Big)^n(t^{-\alpha_1} b_t).
\]
First observe that for a function $(g_t)_{t \in (0,T]}$ we have
\[
 D_{0+}^{\alpha_2} t^{\alpha_1-\alpha_2} g_t= t^{\alpha_1-\alpha_2}D_{0+}^{\alpha_2}  g_t + \frac {\alpha_2}{ \Gamma(1-\alpha_2)} \int_0^t g_s \frac{t^{\alpha_1-\alpha_2}-s^{\alpha_1-\alpha_2}}{(t-s)^{\alpha_2+1} }ds.
 \]
We thus obtain the inequality
\[
\left| D_{0+}^{\alpha_2} t^{\alpha_1-\alpha_2} g_t \right| \leq  \left|t^{\alpha_1-\alpha_2}D_{0+}^{\alpha_2}  g_t\right| + 
\frac {\alpha_2}{ \Gamma(1-\alpha_2)} \int_0^t |g_s| \frac{t^{\alpha_1-\alpha_2}-s^{\alpha_1-\alpha_2}}{(t-s)^{\alpha_2+1} }ds. 
\]
Hence, for $K_t^n$ we get the estimate
\begin{align}   \notag
|K_t^n| &\leq \left| t^{\alpha_1} D_{0+}^{\alpha_2}\left(   I_{0+}^{\alpha_1}t^{\alpha_2-\alpha_1} D_{0+}^{\alpha_2} t^{\alpha_1-\alpha_2}\right)^n(t^{-\alpha_1} b_t)\right| \\  \notag
& \quad +\frac {\alpha_2}{ \Gamma(1-\alpha_2)}     t^{\alpha_2} \int_0^t  \left|  \left( I_{0+}^{\alpha_1}s^{\alpha_2-\alpha_1} D_{0+}^{\alpha_2} s^{\alpha_1-\alpha_2}\right)^n(s^{-\alpha_1} b_s)  \right| \frac{t^{\alpha_1-\alpha_2}-s^{\alpha_1-\alpha_2}}{(t-s)^{\alpha_2+1} }ds  \\
& =: |K^{n,1}_t| +\frac {\alpha_2}{ \Gamma(1-\alpha_2)}    |K^{n,2}_t|.  \label{a1}
\end{align}

\begin{lemma}\label{bound3}
	For every $n \in \N$ it holds
	\begin{equation*}
	|K_t^{n,1}| \leq \frac{G C^n}{\Gamma(n(\alpha_1-\alpha_2))} t^{n(\alpha_1-\alpha_2)-\alpha_2},
	\end{equation*}
	where $G$ is a random variable satisfying the condition in Lemma \ref{bound2} and $C$ is a positive constant.
\end{lemma}
\begin{proof}
	We give a proof by induction.  	First we estimate $|K_t^{n,1}|$ for $n=1$:
	\begin{align*}
	|K_t^{1,1}| \leq \left| t^{\alpha_1} I_{0+}^{\alpha_1-\alpha_2}  D_{0+}^{\alpha_2} (t^{-\alpha_1} b_t) \right|  +  
	\frac {\alpha_2}{ \Gamma(1-\alpha_2)}
	 t^{\alpha_1} \left| I_{0+}^{\alpha_1-\alpha_2}  t^{\alpha_2-\alpha_1} \int_0^t \frac{t^{\alpha_1-\alpha_2} - s^{\alpha_1-\alpha_2}}{(t-s)^{\alpha_2+1}} s^{-\alpha_1} b_sds \right|.
	\end{align*}
	Recall that $\alpha_1-\alpha_2>0$. Hence,  making the change of variables $x=ts$ we obtain
	\[
	\int_0^t \frac{t^{\alpha_1-\alpha_2} - s^{\alpha_1-\alpha_2}}{(t-s)^{\alpha_2+1}} s^{-\alpha_1} ds = \kappa t^{\alpha_1-2\alpha_2},
	\]
	where $\kappa = \int_0^1 \frac {1-x^{\alpha_1-\alpha_2}}{(1-x)^{\alpha_2+1}}dx$.
	So that, now using Lemma \ref{bound2}
	\begin{align*}
	|K_t^{1,1}| & \leq \left| t^{\alpha_1} I_{0+}^{\alpha_1-\alpha_2}  D_{0+}^{\alpha_2} (t^{-\alpha_1} b_t) \right|  + 
	\frac {\kappa \alpha_2   \left(\sup_{s\in [0,T]} | b_s| \right)  }{  \Gamma(1-\alpha_2)}  
	t^{\alpha_1} I_{0+}^{\alpha_1-\alpha_2} t^{-\alpha_2}  \\
	&\leq G_1  t^{\alpha_1} I_{0+}^{\alpha_1-\alpha_2} t^{-\alpha_1-\alpha_2}   +
	\frac {\kappa \alpha_2   \left(\sup_{s\in [0,T]} | b_s| \right)  }{  \Gamma(1-\alpha_2)}  
	t^{\alpha_1} I_{0+}^{\alpha_1-\alpha_2} t^{-\alpha_2}\\
	& \leq  G  t^{\alpha_1-2\alpha_2},
	\end{align*}
	where $G$ is a random variable satisfying \eqref{ecu8}.
	Now assume that the assertion of the Lemma holds for some $n\in \N$. We have
	\begin{align*}
	K_t^{n+1,1} & =  t^{\alpha_1} D_{0+}^{\alpha_2}\left(   I_{0+}^{\alpha_1}t^{\alpha_2-\alpha_1} D_{0+}^{\alpha_2} t^{\alpha_1-\alpha_2}\right)\left(   I_{0+}^{\alpha_1}t^{\alpha_2-\alpha_1} D_{0+}^{\alpha_2} t^{\alpha_1-\alpha_2}\right)^n(t^{-\alpha_1} b_t) \\
	& =t^{\alpha_1} I_{0+}^{\alpha_1-\alpha_2} t^{\alpha_2-\alpha_1} D_{0+}^{\alpha_2} t^{\alpha_1-\alpha_2} \left(   I_{0+}^{\alpha_1}t^{\alpha_2-\alpha_1} D_{0+}^{\alpha_2} t^{\alpha_1-\alpha_2}\right)^n (t^{-\alpha_1} b_t) \\
	& = t^{\alpha_1} I_{0+}^{\alpha_1-\alpha_2} D_{0+}^{\alpha_2} \left(   I_{0+}^{\alpha_1}t^{\alpha_2-\alpha_1} D_{0+}^{\alpha_2} t^{\alpha_1-\alpha_2}\right)^n (t^{-\alpha_1} b_t) \\
	& \quad +  \frac {\alpha_2}{ \Gamma(1-\alpha_2)}    t^{\alpha_1}  I_{0+}^{\alpha_1-\alpha_2} t^{\alpha_2-\alpha_1} \int_0^t \frac{t^{\alpha_1-\alpha_2}-s^{\alpha_1-\alpha_2}}{(t-s)^{\alpha_2+1}} \left(   I_{0+}^{\alpha_1}s^{\alpha_2-\alpha_1} D_{0+}^{\alpha_2} s^{\alpha_1-\alpha_2}\right)^n (s^{-\alpha_1} b_s) ds \\
	& = t^{\alpha_1} I_{0+}^{\alpha_1-\alpha_2} t^{-\alpha_1} K_t^{n,1} \\
	&\quad +\frac {\alpha_2}{ \Gamma(1-\alpha_2)}  t^{\alpha_1}  I_{0+}^{\alpha_1-\alpha_2} t^{\alpha_2-\alpha_1} \int_0^t \frac{t^{\alpha_1-\alpha_2}-s^{\alpha_1-\alpha_2}}{(t-s)^{\alpha_2+1}}  I_{0+}^{\alpha_2} s^{-\alpha_1} K_s^{n,1} ds.
	\end{align*}
	Hence, from the induction assumption we get
	\begin{align*}
	|K_t^{n+1,1}|&\leq \frac{GC^n}{\Gamma(n(\alpha_1-\alpha_2))} t^{\alpha_1} I_{0+}^{\alpha_1-\alpha_2} t^{n(\alpha_1-\alpha_2)-\alpha_1-\alpha_2} \\
	& \quad + \frac {\alpha_2}{ \Gamma(1-\alpha_2)}  \frac{GC^n}{\Gamma(n(\alpha_1-\alpha_2))} t^{\alpha_1}  I_{0+}^{\alpha_1-\alpha_2} t^{\alpha_2-\alpha_1} \int_0^t \frac{t^{\alpha_1-\alpha_2}-s^{\alpha_1-\alpha_2}}{(t-s)^{\alpha_2+1}}  I_{0+}^{\alpha_2} s^{-\alpha_1} s^{n(\alpha_1-\alpha_2)-\alpha_2} ds \\
	& \leq    \frac{GC^{n+1}  }{\Gamma((n+1)(\alpha_1-\alpha_2))} t^{(n+1)(\alpha_1-\alpha_2)-\alpha_2},
	\end{align*}
	for a suitable choice of the constant $C$.
This completes the proof of the lemma.
\end{proof}

\begin{lemma}\label{bound4}
	For every $n \in \N$ it holds
	\begin{equation*}
	|K_t^{n,2}| \leq \frac{ GC^n}{\Gamma(n(\alpha_1-\alpha_2))} t^{n(\alpha_1-\alpha_2)-\alpha_2},
	\end{equation*}
	where $G$ is a random variable satisfying \eqref{ecu8}  and $C$ is a positive finite constant.
\end{lemma}
\begin{proof}
	The proof is similar to the proof of Lemma \ref{bound3}.  We have
	\[
	K_t^{n,2} = t^{\alpha_2} \int_0^t g_s^n  \frac{t^{\alpha_1-\alpha_2}-s^{\alpha_1-\alpha_2}}{(t-s)^{\alpha_2+1} }ds,
	\]
	with
	\[
	g_s^n:= \left( I_{0+}^{\alpha_1}s^{\alpha_2-\alpha_1} D_{0+}^{\alpha_2} s^{\alpha_1-\alpha_2}\right)^n(s^{-\alpha_1} b_s), \quad n\in \N.
	\]
	By induction we will prove that
	\begin{equation}\label{ges}
	|g_s^n| \leq \frac{GC^n}{\Gamma(n(\alpha_1-\alpha_2))} s^{n(\alpha_1-\alpha_2)-\alpha_1}
	\end{equation}
	for every $n\in \N$. First we estimate $|g_s^n|$ for $n=1$. Using Lemma \ref{bound2} we obtain
\[
	|g_s^1| \leq G I_{0+}^{\alpha_1} s^{-\alpha_1-\alpha_2} =  G \frac{\Gamma(1-\alpha_1-\alpha_2)}{\Gamma(1-\alpha_2)} s^{-\alpha_2}.
\]
	Now assume that \eqref{ges} holds for some $n\in \N$. We have, in view of formula  \eqref{we}, 
	\begin{align*}
	|g_s^{n+1}|	& = \left| \left( I_{0+}^{\alpha_1}s^{\alpha_2-\alpha_1} D_{0+}^{\alpha_2} s^{\alpha_1-\alpha_2}\right) g_s^n \right|  \leq \left| I_{0+}^{\alpha_1-\alpha_2} g_s^n \right| \\
	& \quad + 
	\frac {\alpha_2} { \Gamma(1-\alpha_2)} 	 \left|I_{0+}^{\alpha_1} s^{\alpha_2-\alpha_1} \int_0^s \frac{s^{\alpha_1-\alpha_2} - u^{\alpha_1-\alpha_2}}{(s-u)^{\alpha_2+1}} g_u^n du \right| \\
	& \leq \frac{GC^n}{\Gamma(n(\alpha_1-\alpha_2))} I_{0+}^{\alpha_1-\alpha_2}s^{n(\alpha_1-\alpha_2)-\alpha_1} \\
	&\quad + \frac{GC^n \alpha_2}{ \Gamma(1-\alpha_2)\Gamma(n(\alpha_1-\alpha_2))}I_{0+}^{\alpha_1} s^{\alpha_2-\alpha_1} \int_0^s \frac{s^{\alpha_1-\alpha_2} - u^{\alpha_1-\alpha_2}}{(s-u)^{\alpha_2+1}} u^{n(\alpha_1-\alpha_2)-\alpha_1} du \\
	& \leq \frac{GC^n}{\Gamma(n(\alpha_1-\alpha_2))} I_{0+}^{\alpha_1-\alpha_2}s^{n(\alpha_1-\alpha_2)-\alpha_1} \\
	&\quad +\frac{GC^n c_1}{ \Gamma(1-\alpha_2)\Gamma(n(\alpha_1-\alpha_2))} I_{0+}^{\alpha_1} s^{ n(\alpha_1-\alpha_2) -\alpha_1-\alpha_2},
	\end{align*}
	where
	\[
	c_1= \int_0^1  \frac {1-x^{\alpha_1-\alpha_2}}{(1-x) ^{ \alpha_2+1}}dx.
	\]
	Taking into account that
	\[
	I_{0+}^{\alpha_1-\alpha_2}s^{n(\alpha_1-\alpha_2)-\alpha_1)}
	= \frac {\Gamma(n(\alpha_1-\alpha_2)-\alpha_1+1)} {\Gamma((n+1)(\alpha_1-\alpha_2)-\alpha_1+1}
	t^{ (n+1)(\alpha_1-\alpha_2)-\alpha_1}
	\]
	and
	\[
	I_{0+}^{\alpha_1}s^{n(\alpha_1-\alpha_2)-\alpha_1-\alpha_2}
	= \frac {\Gamma(n(\alpha_1-\alpha_2)-\alpha_1-\alpha_2+1)} {\Gamma((n+1)(\alpha_1-\alpha_2)-\alpha_2+1)}
	t^{ (n+1)(\alpha_1-\alpha_2)-\alpha_1},
	\]
	we deduce the existence of a constant $C$ such that
  \eqref{ges} holds for all $n\ge 1$. Overall, for $|K_2^n|$ we get the estimate
	\begin{align*}
	|K_2^n| &\leq \frac{GC^n}{\Gamma(n(\alpha_1-\alpha_2))} t^{\alpha_2} \int_0^t \frac{t^{\alpha_1-\alpha_2} - s^{\alpha_1-\alpha_2}}{(t-s)^{\alpha_2+1}} s^{n(\alpha_1-\alpha_2)-\alpha_1} ds 
	& = \frac{GC^n c_2}{\Gamma(n(\alpha_1-\alpha_2))}t^{n(\alpha_1-\alpha_2)-\alpha_2},
	\end{align*}
	where
	\[
	c_2= \int_0^1  \frac {1-x^{\alpha_1-\alpha_2}}{(1-x) ^{ \alpha_2+1}}  x^{-\alpha_1}dx.
	\]	
	This completes  the proof.
\end{proof}
Now we are in position to complete the proof of Novikov's condition. By \eqref{a1}, 
and Lemmas \ref{bound3} and \ref{bound4}, we can write
\[
|K_t^n| \leq  \frac{GC^n }{\Gamma(n(\alpha_1-\alpha_2))}  t^{n(\alpha_1-\alpha_2)-\alpha_2},
\]
where $G$ is a random variable satisfying \eqref{ecu8}  and $C$ is a positive finite constant.
Hence,  
\[
	|K_t| \leq  G t^{-\alpha_2}.
	\]
	and from \eqref{a2}, the same estimate holds for $\psi_t$.
Using this, the property of the random  variable $G$ and the fact that $-\alpha_2>-\frac12$ proves that
\[
\mathbb{E}\left[\exp \left(\lambda \int_0^t \psi_s^2 ds\right)\right] <\infty\qquad   \text{for all} \quad  \lambda>1,
\]
i.e. Novikov's condition is fulfillled. The proof of Theorem \ref{weak2} is complete. 
\end{proof}

\section{Uniqueness in law and pathwise uniqueness}
The results in this section are obtained by adapting the proofs of \cite[Theorem 4 and Theorem 5]{NO}. First we state the following.

\begin{theorem}\label{law}
	Suppose that one of the conditions in Theorem \ref{weak2} is satisfied. Then two weak solutions have the same distribution.
\end{theorem}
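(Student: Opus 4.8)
The plan is to follow the Girsanov scheme of \cite[Theorem 4]{NO}. Starting from an arbitrary weak solution I change the probability measure so as to cancel the drift; under the new measure $X-x_0$ becomes the driftless sum of two fractional Brownian motions, whose joint law is completely determined by the kernels $K_{H_1},K_{H_2}$ and is therefore the same for every weak solution. The whole argument then reduces to showing that the Radon--Nikodym density between the two measures is a fixed measurable functional of the path of $X$: once this is known, for every bounded measurable $\Phi$ the expectation $\mathbb{E}_P[\Phi(X)]$ is an integral of $\Phi$, weighted by that functional, against the fixed law of the driftless process, and hence does not depend on the chosen weak solution.

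Concretely, I would fix a weak solution $(B^{H_1},B^{H_2},X)$ on $(\Omega,\mathbb{F},(\mathbb{F}_t),P)$ with its $\mathbb{F}$-Brownian motion $W$, set $b_t:=b(t,X_t)$, and run the explicit fractional-operator decomposition of the existence proof with this $b_t$ in place of $b(t,x_0+B^{H_1}_t+B^{H_2}_t)$. This produces $\mathbb{F}^X$-adapted processes $u,v$ with $u_t+v_t=b(t,X_t)$ and
\[
\psi_t=\Big(K_{H_1}^{-1}\!\int_0^\bullet u_r\,dr\Big)(t)=\Big(K_{H_2}^{-1}\!\int_0^\bullet v_r\,dr\Big)(t),\qquad t\in(0,T].
\]
The pathwise bounds of Lemmas \ref{bound1}--\ref{bound4} remain valid with $\sup_t|b(t,X_t)|$ in place of $\sup_t|b_t|$; combined with the a priori estimate $\|X\|_\infty\le C(1+\|B^{H_1}+B^{H_2}\|_\infty)$ coming from Gronwall's lemma under (A1) (respectively its sublinear analogue under (A2)), Fernique's theorem, and the splitting of $[0,T]$ into short subintervals, exactly as on \cite[p.~109]{NO}, they give the Novikov condition $\mathbb{E}_P[\exp(\tfrac12\int_0^T\psi_s^2\,ds)]<\infty$. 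Applying Theorem \ref{GT2} with $(-u,-v)$ in place of $(u,v)$, i.e. in the direction that \emph{adds} the drift, yields a probability measure $Q\sim P$, a $Q$-Brownian motion $\wt W_t=W_t+\int_0^t\psi_s\,ds$, and processes
\[
\wt B^{H_1}_t=B^{H_1}_t+\int_0^t u_s\,ds=\int_0^t K_{H_1}(t,s)\,d\wt W_s,\qquad \wt B^{H_2}_t=B^{H_2}_t+\int_0^t v_s\,ds=\int_0^t K_{H_2}(t,s)\,d\wt W_s
\]
that are $Q$-fractional Brownian motions and satisfy $X_t-x_0=\wt B^{H_1}_t+\wt B^{H_2}_t$.

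Since $dQ/dP=\exp(-\int_0^T\psi_s\,dW_s-\tfrac12\int_0^T\psi_s^2\,ds)$ and $dW_s=d\wt W_s-\psi_s\,ds$, a substitution gives
\[
\frac{dP}{dQ}=\exp\Big(\int_0^T\psi_s\,d\wt W_s-\tfrac12\int_0^T\psi_s^2\,ds\Big).
\]
Here $\psi$ is a fixed measurable functional of the path of $X$, because $u,v$ and $\psi$ are obtained from $t\mapsto b(t,X_t)$ through the causal left-sided fractional operators of Section 2; hence $\int_0^T\psi_s^2\,ds$ is a functional of $X$. The crucial remaining step is to express $\int_0^T\psi_s\,d\wt W_s$ as a functional of $X$ as well. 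Under $Q$ we have $X_t-x_0=\int_0^t\big(K_{H_1}(t,s)+K_{H_2}(t,s)\big)\,d\wt W_s$, and I would show that the combined Volterra operator $K_{H_1}+K_{H_2}$ is invertible, so that $\wt W=(K_{H_1}+K_{H_2})^{-1}(X-x_0)$ and the filtrations generated by $X$ and by $\wt W$ coincide up to $Q$-null sets. Granting this, $\int_0^T\psi_s\,d\wt W_s$ is the value at $X$ of a fixed measurable functional, so $dP/dQ=G(X)$ with $G$ the same for every weak solution. Therefore, writing $\mu$ for the fixed joint law of $(\wt B^{H_1},\wt B^{H_2})$,
\[
\mathbb{E}_P[\Phi(X)]=\mathbb{E}_Q\big[\Phi(X)\,G(X)\big]=\int \Phi(x_0+\xi_1+\xi_2)\,G(x_0+\xi_1+\xi_2)\,\mu(d\xi_1,d\xi_2),
\]
and the right-hand side is independent of the particular weak solution, which is the asserted uniqueness in law.

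I expect the main obstacle to be the invertibility of the combined kernel $K_{H_1}+K_{H_2}$, equivalently the recovery of the single driving Brownian motion $\wt W$ from the observed sum $X-x_0=\wt B^{H_1}+\wt B^{H_2}$. This is exactly the point where the smaller Hurst index dominates: assuming $H_1<H_2$, one factors $K_{H_1}+K_{H_2}=K_{H_1}(I+K_{H_1}^{-1}K_{H_2})$, where $K_{H_1}^{-1}K_{H_2}$ is a causal operator that smooths of the strictly positive order $H_2-H_1$ and is therefore quasi-nilpotent on $L^2([0,T])$; inverting $I+K_{H_1}^{-1}K_{H_2}$ by a Neumann series is precisely the type of fractional-operator computation already carried out in Section 3. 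A secondary, more routine, difficulty is verifying the martingale property of the drift-removing density for an \emph{arbitrary} weak solution rather than for the canonical construction of the existence proof; as indicated above, this is handled by the a priori bound on $\|X\|_\infty$ together with localization in time, following \cite{NO}.
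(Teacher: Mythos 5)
Your overall strategy is the same as the paper's (which follows \cite[Theorem 4]{NO}): build $u,v,\psi$ from $t\mapsto b(t,X_t)$ by the fractional-operator decomposition of the existence proof, check Novikov via the a priori bound on $\|X\|_\infty$, apply Girsanov to pass to a measure $Q$ under which $X-x_0=\int_0^\cdot\bigl(K_{H_1}(\cdot,s)+K_{H_2}(\cdot,s)\bigr)\,d\wt W_s$, and then compute $\mathbb{E}_P[\Phi(X)]=\mathbb{E}_Q[\Phi(X)\,dP/dQ]$. (Your sign bookkeeping, shifting $W$ so as to \emph{add} the drift, is the consistent choice.) The divergence is in the final identification step, and there your route is both harder than necessary and not fully carried out. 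The paper never expresses $dP/dQ$ as a functional of $X$ alone: it only needs that $dP/dQ=\exp\bigl(\int_0^T f_s\,d\wt W_s-\tfrac12\int_0^T f_s^2\,ds\bigr)$ is a fixed measurable functional of the \emph{pair} $(X,\wt W)$, and that the joint $Q$-law of $(X,\wt W)$ coincides with the $P$-law of $(x_0+B^{H_1}+B^{H_2},W)$, which is the same for every weak solution; transferring the functional along this identity gives the closed formula involving $g$ and ends the proof. You instead insist on recovering $\wt W$ from $X$, i.e.\ on inverting the combined Volterra operator $K_{H_1}+K_{H_2}$ and identifying the filtrations generated by $X$ and by $\wt W$. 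That is precisely the point you flag as the main obstacle, and as written it is only a sketch: the quasi-nilpotence of $K_{H_1}^{-1}K_{H_2}$ must be established on the relevant weighted spaces (the factors $t^{\pm(H_i-\frac12)}$ are singular at the origin), and the measurability of the stochastic integral $\int_0^T\psi_s\,d\wt W_s$ with respect to $\sigma(X)$ then needs a separate argument --- in effect a second run of the Neumann-series estimates of Section 3. If completed, your version would prove something slightly stronger (an innovation representation of $\wt W$ in terms of $X$), but for uniqueness in law the joint-law transfer is sufficient and costs nothing; I would replace the inversion step by that observation.
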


\begin{proof}
	We only give a proof in the case $H_1 < \frac12$ and $H_2>\frac12$. The idea in all of the remaining cases is essentially the same. Suppose that the triple $({B}^{H_1},{B}^{H_2},X)$ is a weak solution on a filtered probability space $(\Omega, \mathbb{F}, (\mathbb{F}_t)_{t \in [0, T]}, P)$ with underlying standard Brownian motion $W$. We define the two processes $u,v$ by
	\begin{equation}   \label{e1}
	u_t  = t^{-\alpha_1}\left[ 1+t^{\alpha_1+\alpha_2} I_{0+}^{\alpha_2} [t^{-\alpha_1-\alpha_2} I_{0+}^{\alpha_1}]\right] ^{-1}   \left( t^{\alpha_1} b(t, X_t) \right)
	\end{equation}
	and
	\begin{equation} \label{e2}
	v_t = t^{\alpha_2} I_{0+}^{\alpha_2} [t^{-\alpha_1-\alpha_2} I_{0+}^{\alpha_1} (t^{\alpha_1} u_t)]
	\end{equation}
	for every $t \in (0,T]$. Note that, as argued in the proof of Theorem \ref{weak2} we have
	\begin{equation} \label{up1}
	\left( K_{H_1}^{-1} \int_0^\bullet u_r dr \right) (t)=\left( K_{H_2}^{-1} \int_0^\bullet v_r dr \right) (t), \quad t \in (0,T].
	\end{equation}
	Also, from   \eqref{e1} and \eqref{e2}, it follows  that
	\begin{equation} \label{up2}
	u_t + v_t = b(t,X_t), \quad t \in (0,T].
	\end{equation}
	Define the process $(f_t)_{t\in (0,T]}$ with
	\[
	 f_t=\left( K_{H_1}^{-1} \int_0^\cdot u_r dr \right) (t), \quad t \in (0,T]
	 \]
	and the measure $Q$ by
	\[
	\frac{dQ}{dP} = \exp \left( \int_0^T f_s dW_s - \frac{1}{2} \int_0^T f_s^2 ds \right).
	\]
	Then arguing as on \cite[p.111]{NO}, we deduce that $\int_0^T f_s^2 ds < \infty$ almost surely,  and, moreover,  that
	\[
	 \|X\|_\infty \leq \left( \|B^{H_1}\|_\infty + \|B^{H_2}\|_\infty +cT \right) e^{cT}
	 \]
	and
	\[
	 |X_t - X_s| \leq | B_t^{H_1}- B_s^{H_1}| + | B_t^{H_2}- B_s^{H_2}| +c|t-s| (1+\|X\|_\infty), \quad s,t \in [0,T],  
	 \]
	for some constant $c \in (0, \infty)$. Hence, application of Novikov's theorem yields
	\[
	\mathbb{E} \left[\frac{dQ}{dP}\right] = 1,
	\]
	so that the process $(\wt{W}_t)_{t \in [0, T]}$ with
	\[
	 \wt{W}_t = W_t - \int_0^t f_s ds,\quad  t\in (0,T],
	 \]
	is a standard Brownian motion with respect to the probability measure $Q$. Note that by \eqref{up1} and \eqref{up2} we have
	\begin{equation}\label{up3}
	X_t = \int_0^t K_{H_1} (t,s) d \wt{W}_s + \int_0^t K_{H_2} (t,s) d \wt{W}_s, \quad t \in [0,T].
	\end{equation}
	Let
	\[
	\psi_t = t^{-\alpha_1}\left[ 1+t^{\alpha_1+\alpha_2} I_{0+}^{\alpha_2} [t^{-\alpha_1-\alpha_2} I_{0+}^{\alpha_1}]\right] ^{-1}   \left( t^{\alpha_1} b(t, B^{H_1}_t + B^{H_2}_t) \right) 
	\]
	and
	\[
	 g_t=\left( K_{H_1}^{-1} \int_0^\cdot \psi_r dr \right) (t)
	 \]
	for every $t \in (0,T]$. As a consequence from \eqref{up3} and the same calculations made on \cite[p.111]{NO} we obtain for every bounded measurable functional $\Psi$ on $C([0,T])$ that
	\begin{align*}
	\mathbb{E} [ \Psi (X)] = \mathbb{E} \left[ \Psi (B^{H_1} + B^{H_1}) \left( \exp \left( -\int_0^T g_s dW_s + \frac{3}{2} \int_0^T g_s^2 ds \right) \right) \right],
	\end{align*}
	where the expectation is taken with respect to the probability measure $P$. Since this equality holds for every weak solution, we obtain the uniqueness in law.
\end{proof}
The next result is obtained as a corollary of Theorem \ref{law}.

\begin{cor} \label{unique}
	Suppose that one of the conditions in Theorem \ref{weak2} is satisfied. Then two weak solutions defined on the same filtered probability space coincide almost surely.
\end{cor}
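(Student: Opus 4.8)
The plan is to deduce pathwise uniqueness from the uniqueness in law already proved in Theorem~\ref{law}, together with the Girsanov representation underlying it. Let $X^1$ and $X^2$ be two weak solutions on a common filtered probability space $(\Omega,\mathbb{F},(\mathbb{F}_t)_{t\in[0,T]},P)$; since $B^{H_1}=\int_0^{\bullet}K_{H_1}(\cdot,s)\,dW_s$ and $B^{H_2}=\int_0^{\bullet}K_{H_2}(\cdot,s)\,dW_s$ are functionals of one and the same Brownian motion $W$, the two solutions are driven by the same noise, and their difference $Z_t:=X^1_t-X^2_t=\int_0^t\bigl(b(s,X^1_s)-b(s,X^2_s)\bigr)\,ds$ is absolutely continuous with $Z_0=0$. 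It is worth stressing that this relation cannot be closed by a direct Gronwall argument: under (A1) the drift is merely measurable, and even under (A2) one only gets $|\dot Z_t|\le c\,|Z_t|^{\beta}$ with $\beta<1$, an ODE that admits nontrivial solutions issuing from $0$. Thus the regularizing effect of the noise, encoded in Theorem~\ref{law}, is indispensable.

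First I would show that every weak solution is in fact a strong solution realized by a single universal functional of the driving Brownian motion, i.e. there is a measurable map $F\colon C([0,T])\to C([0,T])$, depending only on $b$, $x_0$, $H_1$, $H_2$, such that $X=F(W)$ $P$-almost surely for every weak solution $(B^{H_1},B^{H_2},X)$ with driving motion $W$. To produce $F$ I would approximate $b$ by smooth, bounded drifts $b_m$ (preserving (A1) or (A2) with uniform constants), for which the corresponding equation has a genuine strong solution $X^m=F_m(W)$ by classical theory. The a priori bounds behind Theorem~\ref{weak2}, in particular the uniform control on $\psi$ (e.g. Lemma~\ref{bound1}) and the Novikov estimate, give tightness of $(X^m,W)$ and uniform integrability of the associated Girsanov densities; combined with the uniqueness in law of the limiting equation (Theorem~\ref{law}) this should identify a common limit $F(W)$ of the $F_m(W)$ in probability, which solves \eqref{one2} and is $\sigma(W)$-measurable. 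The Gy\"ongy--Krylov characterization of convergence in probability is the natural tool here, turning the statement ``every subsequential joint limit is a pair of solutions of the limit equation with the same noise'' into genuine convergence to one functional.

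Granting the universal representation $X=F(W)$, the Corollary is immediate: applied to $X^1$ and $X^2$, which share the same $W$, it yields $X^1=F(W)=X^2$ $P$-almost surely, which is exactly the asserted pathwise coincidence.

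The step I expect to be the genuine obstacle is the passage from uniqueness in law to the universal functional $F$, that is, the ``diagonal'' conclusion of the approximation scheme. Uniqueness in law alone does not force two solutions driven by the same noise to coincide -- the Tanaka equation is the classical counterexample -- so at this point one must exploit the additive structure and, crucially, the explicit form of the Girsanov density from the proof of Theorem~\ref{law}: the density is a functional of $W$ through the free process $B^{H_1}+B^{H_2}$, and the bounds on $\psi$ make the change of measure rigid enough that the regular conditional law of $X$ given $W$ degenerates to a point mass. Concretely, under the measure that renders $X^1$ a free sum of fractional Brownian motions one would estimate $\mathbb{E}\bigl[\,|b(s,X^1_s)-b(s,X^1_s+Z_s)|\,\bigr]$ by averaging $b$ against the (smooth) law of the free process, recovering an effectively Lipschitz bound in $Z_s$ that Gronwall in expectation can then close. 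Establishing this averaged estimate, uniformly along the approximating drifts $b_m$, is where the real work lies; the remaining measure-theoretic bookkeeping (disintegration and the conditional-independence construction of Yamada--Watanabe type) is routine.
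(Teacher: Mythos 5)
Your proposal does not follow the paper's route and, as written, contains a genuine gap. The paper treats this statement as an immediate consequence of Theorem \ref{law}: the representation
$\mathbb{E}[\Psi(X)]=\mathbb{E}\bigl[\Psi(B^{H_1}+B^{H_2})\exp\bigl(-\int_0^T g_s\,dW_s+\tfrac32\int_0^T g_s^2\,ds\bigr)\bigr]$
established there holds for \emph{every} weak solution on the given filtered space with the given driving noise, and the pathwise statement is obtained by adapting the short argument of \cite[Theorem 5]{NO} to this identity, running the Girsanov computation simultaneously for the two solutions $X^1,X^2$ (which share the same $W$ and hence the same free process $B^{H_1}+B^{H_2}$). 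No approximation of the drift, no tightness argument, and no Krylov estimate enter at this stage; the Krylov bound of Proposition \ref{kr} is reserved for strong \emph{existence} in Section 5.

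The gap in your scheme is the Gy\"ongy--Krylov step, and it is a circularity rather than a technicality. To conclude that the approximations $X^m=F_m(W)$ converge in probability to a single functional $F(W)$, Gy\"ongy--Krylov requires that every joint subsequential limit $(X,\widetilde X,W)$ of the pairs $(X^{m_k},X^{n_k},W)$ be supported on the diagonal $\{X=\widetilde X\}$ --- which is exactly the pathwise uniqueness you are trying to prove. Theorem \ref{law} identifies only the marginal laws of $X$ and $\widetilde X$, and, as you yourself note via Tanaka's example, equality of laws does not force two solutions with the same noise to coincide. Your proposed repair --- a Gronwall argument in expectation for $\mathbb{E}\bigl[|b(s,X^1_s)-b(s,X^1_s+Z_s)|\bigr]$ obtained by averaging $b$ against the Gaussian law of the free process --- does not go through as stated: $Z_s$ is a functional of the whole past of both solutions and is correlated with $X^1_s$, so this expectation is not the smoothed drift evaluated at $Z_s$; making such an argument rigorous is the content of Davie-type uniqueness theorems and requires quantitative regularization estimates for the sum of two \emph{dependent} fractional Brownian motions that appear neither in the paper nor in your sketch (and under (A1) the drift is merely measurable with linear growth, not bounded). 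Finally, even identifying the limit of $\int_0^t b_m(s,X^m_s)\,ds$ for merely measurable $b$ already needs a Krylov-type bound such as Proposition \ref{kr}, which your argument never invokes. The intended proof is far shorter and rests entirely on the explicit change-of-measure formula from Theorem \ref{law}.
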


\section{Existence of strong solutions}
Now we turn to the proof of uniqueness and existence of strong solutions. By Corollary \ref{unique} it suffices to show the existence of strong solutions. If $\min(H_1,H_2) >\frac12$ the existence follows from the assumption that $b$ is (H\"older) continuous. The main contribution in this section is the proof of existence of strong solutions in case $\min(H_1,H_2) \leq \frac12$, more particularly the statement of Proposition \ref{kr} below establishing a Krylov-type estimate. Then, in view of the steps executed in the proof of \cite[Proposition 7 and Theorem 8]{NO}, we will have established our main result which reads as follows.

\begin{theorem}
	Let one of the conditions in Theorem \ref{weak2} be satisfied. Then there exists a unique strong solution.
\end{theorem}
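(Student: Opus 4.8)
The plan is to apply the Yamada--Watanabe principle. Corollary \ref{unique} already provides pathwise uniqueness and Theorem \ref{weak2} provides a weak solution, so it suffices to construct a solution that is adapted to the filtration generated by the driving Brownian motion $W$; the asserted uniqueness of the strong solution then follows from pathwise uniqueness. When $\min(H_1,H_2) > \frac12$ the argument is soft: under (A2) the drift is continuous, so after subtracting the noise the process $Y_t := X_t - B^{H_1}_t - B^{H_2}_t$ must solve, for almost every path, the ordinary integral equation $Y_t = x_0 + \int_0^t b(s, Y_s + B^{H_1}_s + B^{H_2}_s)\,ds$ with continuous right-hand side; a Peano/compactness argument (mollifying $b$ and extracting a locally uniform limit via Arzel\`a--Ascoli) produces a solution, and the pathwise uniqueness of Corollary \ref{unique} upgrades this to a genuine strong, i.e.\ $W$-adapted, solution. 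Hence the only substantial case is $\min(H_1,H_2) \le \frac12$ under (A1).

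For this case I would follow the scheme of \cite[Proposition 7 and Theorem 8]{NO}. First, mollify the drift: choose smooth functions $b_n$, Lipschitz in $x$, with $b_n \to b$ almost everywhere and satisfying the linear growth bound (A1) uniformly in $n$. For each $n$ the equation \eqref{one2} with drift $b_n$ has a unique strong solution $X^n$, adapted to $W$, by classical theory. The goal is to show that $(X^n)_n$ converges, in probability and uniformly on $[0,T]$, to a process $X$ that remains adapted to $W$ and solves \eqref{one2}; such an $X$ is the desired strong solution.

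The engine of the convergence is the Krylov-type estimate of Proposition \ref{kr}, which bounds $\mathbb{E}\big[\int_0^T |f(s,X^n_s)|\,ds\big]$ by a constant multiple of a suitable $L^p$-norm of $f$, uniformly in $n$. This estimate is obtained through the Girsanov theory of Theorem \ref{GT2}: under the measure $Q_n$ produced there the process $x_0 + B^{H_1} + B^{H_2}$ has the law of the unperturbed sum of fractional Brownian motions, whose occupation properties yield the $L^p$ control, and the bound is transported back to $P$ using H\"older's inequality together with the uniform exponential integrability of the densities $L_T^n$ established in the proof of Theorem \ref{weak2}. Applying this estimate to $|b_n - b_m|$ and using $b_n \to b$ almost everywhere shows that $\mathbb{E}\big[\int_0^T |b_n(s,X^n_s) - b_m(s,X^m_s)|\,ds\big] \to 0$, whence $\mathbb{E}[\sup_{t\le T}|X^n_t - X^m_t|] \to 0$ and the sequence is Cauchy. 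Passing to the limit in the approximate equations, with the Krylov bound again used to identify $\int_0^t b_n(s,X^n_s)\,ds \to \int_0^t b(s,X_s)\,ds$, shows that the limit $X$ solves \eqref{one2}; since each $X^n$ is $W$-adapted, so is $X$.

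The main obstacle I expect is the Krylov-type estimate itself and, within it, securing the \emph{uniform} exponential integrability of the Girsanov densities across the approximating drifts. Because $B^{H_1}$ and $B^{H_2}$ are driven by the same $W$ and are therefore dependent, the shift process $\psi$ is built simultaneously from both inverse kernels $K_{H_1}^{-1}$ and $K_{H_2}^{-1}$, so the series estimates for $\psi$ obtained in the proof of Theorem \ref{weak2} (via the bound on $u$ in Lemma \ref{bound1} and on the terms $J_t^n$) must be shown to hold with constants independent of $n$. This is precisely where the regularization by the smaller Hurst index dominates: the integrability exponent in Proposition \ref{kr} is governed by $\min(H_1,H_2)$, and the control $-\alpha_2 > -\frac12$ used to verify Novikov's condition is what ultimately makes the limiting procedure, and hence the strong solution, go through.
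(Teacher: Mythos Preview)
Your proposal is correct and follows essentially the same approach as the paper: pathwise uniqueness from Corollary \ref{unique}, direct existence via continuity of $b$ when $\min(H_1,H_2)>\tfrac12$, and in the remaining case the mollification scheme of \cite[Proposition 7 and Theorem 8]{NO} driven by the Krylov-type estimate of Proposition \ref{kr}. The paper's own proof simply points to Proposition \ref{kr} and defers the approximation argument to \cite{NO}; your write-up spells out those steps explicitly, including the role of the uniform (in $n$) Novikov/exponential bounds, which is exactly what the reference supplies.
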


\begin{proof}
	As described above the result follows from Proposition \ref{kr} below, which is the analogue of \cite[Proposition 6]{NO}.
\end{proof}

\begin{prop} \label{kr}
	Suppose that $\min(H_1,H_2) \leq \frac12$ and that the function $b$ is uniformly bounded. Let $X$ be a weak solution and $\rho > 1+\min(H_1,H_2)$. There exists a constant $C \in (0, \infty)$ only depending on $T$, $\|b\|_\infty$ and $\rho$ such that for every measurable function $g\colon [0,T] \times \rr \to [0, \infty)$ it holds
	\[
	\mathbb{E} \left[ \int_0^T g(t,X_t) dt \right] \leq C \left( \int_0^T \int_\rr g(t,y)^\rho dy dt \right)^{\frac{1}{\rho}} ,
	\]
	where the expectation is with respect to the underlying probability measure $P$.
\end{prop}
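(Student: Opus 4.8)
The plan is to follow the strategy of \cite[Proposition 6]{NO}: pass from the law $P$ of the (non-Gaussian) solution to an equivalent measure $Q$ under which $X$ is an explicit Gaussian process, carry out the estimate there, and transfer back with a Radon--Nikodym factor whose high moments are controlled because $b$ is bounded. Throughout set $H=\min(H_1,H_2)\le\frac12$ and let $\rho'=\rho/(\rho-1)$ denote the conjugate exponent of $\rho$.

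\emph{Step 1: removal of the drift.} Exactly as in the proof of Theorem \ref{law}, associate to the weak solution $X$ the processes $u,v$ with $u_t+v_t=b(t,X_t)$ and the common shift $\psi_t=\big(K_{H_1}^{-1}\int_0^\bullet u_r\,dr\big)(t)=\big(K_{H_2}^{-1}\int_0^\bullet v_r\,dr\big)(t)$, and let $Q$ be the measure constructed there. Under $Q$ there is a Brownian motion $\wt W$ such that $X_t=x_0+\int_0^t K_{H_1}(t,s)\,d\wt W_s+\int_0^t K_{H_2}(t,s)\,d\wt W_s$, so that $X_t$ is Gaussian under $Q$. Since $b$ is now uniformly bounded, the pointwise bounds on $\psi$ obtained in the proof of Theorem \ref{weak2} (of the form $|\psi_t|\le G\,t^{-\alpha}$ with $\alpha=\frac12-H<\frac12$) hold with $\sup_s|b_s|\le\|b\|_\infty$, hence with $G$ replaced by a deterministic constant; consequently $\int_0^T\psi_s^2\,ds$ is bounded by a constant depending only on $T,\|b\|_\infty,H_1,H_2$, and $\frac{dP}{dQ}$ lies in $L^{\rho'}(Q)$ with $\big\|\frac{dP}{dQ}\big\|_{L^{\rho'}(Q)}\le C(T,\|b\|_\infty,\rho)$.

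\emph{Step 2: the Gaussian density bound.} Under $Q$ the variable $X_t-x_0$ is a Wiener integral with variance $\sigma_t^2=\int_0^t\big(K_{H_1}(t,s)+K_{H_2}(t,s)\big)^2\,ds$. Using the homogeneity $K_{H_i}(t,ts)=t^{H_i-\frac12}K_{H_i}(1,s)$ one finds $\sigma_t^2=a_1t^{2H_1}+2c_{12}t^{H_1+H_2}+a_2t^{2H_2}$, where $a_i=\int_0^1 K_{H_i}(1,s)^2\,ds$ and $c_{12}=\int_0^1 K_{H_1}(1,s)K_{H_2}(1,s)\,ds$. Because $H_1\neq H_2$ the kernels $K_{H_1}(1,\cdot)$ and $K_{H_2}(1,\cdot)$ are linearly independent in $L^2([0,1])$, so $a_1a_2-c_{12}^2>0$ and the quadratic form is positive definite; hence $\sigma_t^2\ge c\,(t^{2H_1}+t^{2H_2})\ge c\,t^{2H}$. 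This is exactly where the dominance of the rougher noise enters. It follows that the density $p_t$ of $X_t$ under $Q$ satisfies $\|p_t\|_\infty=(2\pi\sigma_t^2)^{-1/2}\le C\,t^{-H}$.

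\emph{Step 3: assembly.} For fixed $t$, changing measure and applying H\"older's inequality in $\omega$ with exponents $\rho',\rho$ gives $\mathbb{E}_P[g(t,X_t)]=\mathbb{E}_Q\big[\tfrac{dP}{dQ}\,g(t,X_t)\big]\le\big\|\tfrac{dP}{dQ}\big\|_{L^{\rho'}(Q)}\big(\int_\rr g(t,y)^\rho p_t(y-x_0)\,dy\big)^{1/\rho}\le C\,t^{-H/\rho}\big(\int_\rr g(t,y)^\rho\,dy\big)^{1/\rho}$. Integrating in $t$ (Tonelli, as $g\ge0$) and applying H\"older in time with exponents $\rho',\rho$ yields $\mathbb{E}_P\big[\int_0^T g(t,X_t)\,dt\big]\le C\big(\int_0^T t^{-H\rho'/\rho}\,dt\big)^{1/\rho'}\|g\|_{L^\rho([0,T]\times\rr)}$. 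The time integral is finite precisely when $H\rho'/\rho=H/(\rho-1)<1$, i.e. when $\rho>1+\min(H_1,H_2)$, which is the standing hypothesis; this proves the claim. The main obstacle is Step 1, the control of $\frac{dP}{dQ}$ in $L^{\rho'}(Q)$: it is here that the boundedness of $b$ and the exponential integrability of $\psi$ established in the proof of Theorem \ref{weak2} are needed, whereas the variance lower bound in Step 2 and the final integrability are elementary once the singularity $t^{-H/\rho}$ has been identified.
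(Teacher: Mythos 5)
Your argument is correct and follows essentially the same route as the paper: a Girsanov change to a measure $Q$ under which $X$ is Gaussian, control of the Radon--Nikodym density using the boundedness of $b$ together with the estimates from the proof of Theorem \ref{weak2}, a Gaussian density bound $\|p_t\|_\infty \le C\,t^{-H}$ with $H=\min(H_1,H_2)$, and H\"older's inequality, with the hypothesis $\rho>1+H$ entering exactly through the integrability of $t^{-H/(\rho-1)}$ near the origin. The only differences are organizational: you apply H\"older first in $\omega$ for fixed $t$ and then in time (the paper does one joint application with auxiliary exponents $\alpha,\beta,\delta,\gamma$), and you justify the lower bound $\sigma_t^2\ge c\,t^{2H}$ via positive-definiteness of the Gram matrix of the two kernels, where the paper simply drops the cross term and the $K_{H_2}^2$ term --- your version is in fact slightly more careful on that point.
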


\begin{proof}
	Throughout this proof denote by $c \in (0, \infty)$ a generic constant only depending on $T$, $\|b\|_\infty$ and $\rho$. Without loss of generality assume that $H_1 = \min(H_1,H_2) \leq \frac12$. Define the processes $(u_t)_{t\in (0,T]}$, $(f_t)_{t\in (0,T]}$ and the probability measure $Q$ as explained in the proof of Theorem \ref{law}. Let $\alpha, \beta \in (1, \infty)$ be two constants with $\frac{1}{\alpha} + \frac{1}{\beta} = 1$. We denote by $\mathbb{E}_Q$ the expectation with respect to $Q$. Then by H\"older's inequality
	\begin{align} \label{kp1}
	\begin{split}
	\mathbb{E} \left[ \int_0^T g(t,X_t) dt \right] & \leq c \mathbb{E}_Q \left[ \left( \frac{dP}{dQ}\right) ^\alpha \right]^{\frac{1}{\alpha}} \left( 	\mathbb{E}_Q \left[ \int_0^T g(t,X_t)^\beta dt \right] \right)^{\frac{1}{\beta}}.
	\end{split}
	\end{align}
	Due to the proof of Theorem \ref{weak2} we obtain for every $\alpha>1$ that
	\begin{align} \label{kp2}
	\begin{split}
	\mathbb{E}_Q \left[ \left( \frac{dP}{dQ}\right) ^\alpha \right]^{\frac{1}{\alpha}} & = \mathbb{E}_Q \left[ \exp \left( -\alpha \int_0^T f_s dW_s + \frac{\alpha}{2} \int_0^T f_s^2 ds \right) \right] < \infty.
	\end{split}
	\end{align}
	Now we estimate the term
	\[
		\mathbb{E}_Q \left[ \int_0^T g(t,X_t)^\beta dt \right] 
		\]
	in the following. Since there exists a standard Brownian motion $\wt{W}$ with respect to $Q$ such that for every $t \in [0,T]$ we have
	\[
	X_t = \int_0^t K_{H_1} (t,s) d \wt{W}_s + \int_0^t K_{H_2} (t,s) d \wt{W}_s,
	\]
	under the probability measure $Q$, the random variable $X_t$ follows a centered normal distribution with variance
	\[
	\sigma^2(t) = \int_0^t \left( K_{H_1} (t,s) + K_{H_2} (t,s) \right) ^2 ds.
	\]
	Let $\delta \in (1+H_1, \infty)$ and $\gamma$ be the constant such that $\frac{1}{\gamma} + \frac{1}{\delta} =1$. Hence, by H\"older's inequality
	\begin{align*}
	\mathbb{E}_Q \left[ \int_0^T g(t,X_t)^\beta dt \right] & = \int_0^T \frac{1}{\sqrt{2\pi \sigma^2(t)}} \int_\rr g(t,y)^\beta e^{-\frac{y^2}{2\sigma^2(t)}} dy dt \\
	& \leq c  \left( \int_0^T \int_\rr g(t,y)^{\beta \delta}  dy dt \right)^{\frac{1}{\delta}} \left( \int_0^T \int_\rr \sigma(t)^{-\gamma}e^{-\frac{\gamma y^2}{2\sigma^2(t)}}   dy dt \right)^{\frac{1}{\gamma}} \\
	& = c  \left( \int_0^T \int_\rr g(t,y)^{\beta \delta}  dy dt \right)^{\frac{1}{\delta}} \left( \int_0^T  \sigma(t)^{1-\gamma} dt \right)^{\frac{1}{\gamma}} .
	\end{align*}
	Combining the latter inequality with \eqref{kp1} and \eqref{kp2} it suffices to show that the expression
	\[
	\int_0^T  \sigma(t)^{1-\gamma} dt 
	\]
	is finite in order to complete the proof. But indeed
	\begin{align*}
	\int_0^T  \sigma(t)^{1-\gamma} dt & \leq \int_0^T \left(  \int_0^t K^2_{H_1} (t,s)  ds \right) ^{\frac{1-\gamma}{2}} dt \\
	&   \leq c \int_0^T t^{H_1 (1-\gamma)} dt   <\infty.
	\end{align*}
\end{proof}
 
\bibliographystyle{amsplain}
\bibliography{lit}



 




\end{document}